\def\draft{n}
\newcommand{\sbullet}{%
  \hbox{\fontfamily{lmr}\fontsize{.4\dimexpr(\f@size pt)}{0}\selectfont\textbullet}}
\def\printname#1{
        \if\draft y
                \smash{\makebox[0pt]{\hspace{-0.5in}
                        \raisebox{8pt}{\tt\tiny #1}}}
        \fi
}
\def\printname#1{
        \if\draft y
                \smash{\makebox[0pt]{\hspace{-0.5in}
                        \raisebox{8pt}{\tt\tiny #1}}}
        \fi
}
\newlength{\standardunitlength}
\long\def\@makecaption#1#2{%
     \vskip 10pt

\setbox\@tempboxa\hbox{
       \small\sf{\bfcaptionfont #1. }\ignorespaces #2}%
     \ifdim \wd\@tempboxa >\captionwidth {%
         \rightskip=\@captionmargin\leftskip=\@captionmargin
         \unhbox\@tempboxa\par}%
       \else
         \hbox to\hsize{\hfil\box\@tempboxa\hfil}%
     \fi}
\font\bfcaptionfont=cmssbx10 scaled \magstephalf
\newdimen\@captionmargin\@captionmargin=2\parindent
\newdimen\captionwidth\captionwidth=\hsize
\def\lbl#1{\label{#1}\printname{#1}}
                        \theoremstyle{plain}
\newtheorem{theorem}{Theorem}[section]
\newtheorem{lemma}[theorem]{Lemma}
\newtheorem{corollary}[theorem]{Corollary}
\newtheorem{proposition}[theorem]{Proposition}
\newtheorem{conjecture}{Conjecture}
\newtheorem{definition}{Definition}
\theoremstyle{definition}
\newtheorem{remark}{Remark}[section]
\def\BC{\mathbb C}
\def\BZ{\mathbb Z}
\def\BR{\mathbb R}
\def\CR{\mathcal R}
\def\al{\alpha}
\def\be { \begin{equation} }
\def\ee { \end{equation} }
\newcommand\no[1]{}
\def\bP{{\mathbf P}}
                  \def\uu{\theta}
      \def\nc{\newcommand}
                  \nc\FI[2]{\begin{figure}
    \begin{center}\input{#1.pstex_t}\end{center}
    \caption{#2}
    \lbl{#1}
  \end{figure}}
\nc\FIG[3]{\begin{figure}
    \includegraphics[#3]{#1.eps}
    \caption{#2}
    \lbl{fig:#1}
    \end{figure}}
\nc\FF[3]{\begin{figure}
    \includegraphics[#3]{#1.eps}
    \caption{#2}
    \lbl{#1}
    \end{figure}}
    \nc\FIGc[3]{\begin{figure}[htpb]
    \includegraphics[height=#3]{#1.eps}
    \caption{#2}
    \lbl{fig:#1}
    \end{figure}}
    \nc\FIGh[3]{\begin{figure}[htpb]
    \includegraphics[height=#3]{draws/#1.eps}
    \caption{#2}
    \lbl{fig:#1}
    \end{figure}}
\def\cS{\mathscr S}
\def\cF{\mathcal F}
\def\cP{\mathcal P}
\def\Id{\mathrm{Id}}
\def\bQ{{\mathbf Q}}
\def\SM{(\Sigma,\cP)}
\def\pS{\partial \Sigma}
\def\cI{\mathcal I}
\def\cD{\mathcal D}
                  \def\Zq{{\BZ[q^{\pm 1}]}}
\begin{document}

\title[Positivity of skein algebras]{On positivity of Kauffman bracket skein algebras of surfaces }

\author[Thang  T. Q. L\^e]{Thang  T. Q. L\^e}
\address{School of Mathematics, 686 Cherry Street,
 Georgia Tech, Atlanta, GA 30332, USA}
\email{letu@math.gatech.edu}

\thanks{Supported in part by National Science Foundation. \\
2010 {\em Mathematics Classification:} Primary 57N10. Secondary 57M25.\\
{\em Key words and phrases: Kauffman bracket skein module, positive basis.}}

\begin{abstract}
We show that the Chebyshev polynomials form a basic block of any positive basis of the Kauffman bracket skein algebras of surfaces.

\end{abstract}

\maketitle

\def\cI{\mathcal I}
\def\Id{\mathrm{Id}}
\def\BA{\mathbb A}
\def\cB{\mathcal B}
\def\sM{\mathcal M}
\section{Introduction}

\subsection{Positivity of Kauffman bracket skein algebras}Let $\Zq$ be the ring of Laurent polynomial in an indeterminate $q$ with integer coefficients.
Suppose $\Sigma$ is an oriented surface.
The Kauffman bracket skein algebra $\cS_\Zq(\Sigma)$ is a $\Zq$-algebra generated by unoriented framed links in $\Sigma \times [0,1]$ subject the Kauffman skein relations \cite{Kauffman} which are recalled in Section \ref{sec:prelim}. The skein algebras were introduced by Przyticki \cite{Przy} and independently Turaev  \cite{Turaev:skein} in an attempt to generalize the Jones polynomial to links in general 3-manifolds, and  have connections and applications to many interesting objects like character varieties, topological quantum field theories, quantum invariants, quantum Teichm\"uller spaces, and many others, see e.g. \cite{BW,Bullock,PS,Le:AJ,Muller,Przy,Thurston}.

The quotient $\cS_\BZ(\Sigma)= \cS_\Zq(\Sigma)/(q-1)$, which is a $\BZ$-algebra, is called the skein algebra at $q=1$.

For $R=\BZ$ let $R_+ = \BZ_+$, the set of non-negative integers, and for $R=\Zq$ let $R_+= \BZ_+[q^{\pm1}]$, the set of Laurent polynomials with non-negative coefficients, i.e. the set of $\BZ_+$-linear combinations of integer powers of $q$.
An $R$-algebra, where $R=\Zq$ or $R=\BZ$, is said to be {\em positive} if it is free as an $R$-module and has a basis $\cB           
$
in which the structure constants are in $R_+$, i.e. for any $b,b', b''\in \cB$, the $b$-coefficient of the product $b'b''$ is in $R_+$. Such a basis $\cB$ is called a {\em positive} basis of the algebra.

The important positivity conjecture of Fock and Goncharov \cite{FockGoncharov} states that $\cS_R(\Sigma)$ is positive for the case $R=\Zq$ and $R=\BZ$.
The first case obviously implies the second.
The second case, on the positivity over $\BZ$, was proved by D. Thurston \cite{Thurston}. Moreover, D. Thurston make precise the positivity conjecture by specifying the positive basis as follows.

A {\em normalized sequence of polynomials over $R$} is a sequence $\bP=(P_n(t)_{n\ge 0})$, such that for each $n$, $P_n(t)\in R[t]$ is a monic polynomial of degree $n$.
Every normalized sequence $\bP$ over $R$ gives rise in a canonical way a basis $\cB_\bP$ of the $R$-module $\cS_R(\Sigma)$, see Section \ref{sec:prelim}.

\begin{definition}
A sequence $\bP=(P_n(t)_{n\ge 0})$ of polynomials in $R[t]$ is {\em positive over $R$} if it is normalized and the basis $\cB_\bP$ is a positive basis of  $\cS_R(\Sigma)$, for any oriented surface $\Sigma$.
\end{definition}

 Then D. Thurston proved the following.
  \begin{theorem}[\cite{Thurston}] \lbl{r.Thurston}
The sequence $(T_n)$ of Chebyshev's polynomials is positive over $R=\BZ$.
\end{theorem}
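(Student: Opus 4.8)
The plan is to prove positivity by computing the structure constants of $\cB_{(T_n)}$ explicitly: I resolve products of threaded multicurves at $q=1$ and show that the signs forced by the Kauffman relation cancel precisely because the threading polynomials are the Chebyshev polynomials. Recall that a basis element of $\cB_{(T_n)}$ is a multicurve whose parallel families have been ``threaded'' by Chebyshev polynomials: if a multicurve has $n_i$ parallel copies of a simple closed curve $\al_i$, the corresponding basis element is $\prod_i T_{n_i}(\al_i)$, where $T_{n_i}(\al_i)$ is the skein element obtained by substituting $j$ parallel copies of $\al_i$ for $t^j$ in $T_{n_i}(t)$. Since each $T_n$ is monic of degree $n$, this substitution is unitriangular with respect to the multicurve basis graded by total number of components; this re-proves that $\cB_{(T_n)}$ is a basis and reduces positivity to the single assertion: for any two basis elements $b',b''$, the product $b'b''$ expands in $\cB_{(T_n)}$ with coefficients in $\BZ_+$.

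First I would compute $b'b''$ geometrically. Stacking $b'$ over $b''$ in $\Sigma\times[0,1]$ and isotoping the underlying multicurves to meet transversally in $k$ points, I resolve every crossing by the Kauffman relation at $q=1$: each crossing contributes $\pm$ the sum of its two smoothings, and each resulting contractible loop evaluates to $-2$. This yields a finite state sum $b'b'' = \sum_s \pm\,(-2)^{c(s)}\,D_s$, where $s$ runs over the choices of smoothing at all crossings, $c(s)$ is the number of contractible loops produced, and $D_s$ is the reduced multicurve obtained. Re-expanding each ordinary multicurve $D_s$ back into $\cB_{(T_n)}$, by inverting the Chebyshev change of basis (whose coefficients are explicit integers), then produces the structure constants as alternating integer sums. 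The entire difficulty is to show that, after this reorganization, every coefficient is $\ge 0$ despite the manifest signs $\pm$ and $(-2)^{c(s)}$.

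The cancellation mechanism, and where I would look for the proof, is the trace interpretation of the skein algebra at $q=1$. Under the Bullock--Przytycki--Sikora isomorphism of $\cS_\BZ(\Sigma)$ at $q=1$ (the value $A=-1$) with the universal $SL_2$-character ring, a simple closed curve $\al$ maps to $-\tr_\al$, and one checks $T_n(\al)\mapsto (-1)^n\tr_{\al^n}$ using $T_n(x+x^{-1})=x^n+x^{-n}$ and the consequent parity $T_n(-t)=(-1)^nT_n(t)$. Thus the Chebyshev basis corresponds, up to controlled signs, to trace functions of multicurves with each component raised to a power, and the fundamental identity $\tr(A)\tr(B)=\tr(AB)+\tr(AB^{-1})$ expands products of such functions with $+$ signs; the model computation is the Frohman--Gelca product-to-sum formula on the torus, where $\cB_{(T_n)}$ obeys the manifestly positive rule $e_{(p,q)}e_{(r,s)}=e_{(p+r,q+s)}+e_{(p-r,q-s)}$. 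The hard part, which I expect to be the crux of the whole argument, is to promote this heuristic to an honest proof on an arbitrary surface: the Chebyshev threading is exactly the normalization for which the ``turnback'' (capping) terms carrying the negative loop values $-2$ cancel against the crossing signs, so the proof amounts to a careful bookkeeping of the state sum in which parallel strands carried by each Chebyshev band are grouped and the three-term recursion $T_n=t\,T_{n-1}-T_{n-2}$ is used to absorb each $-2$, leaving only non-negative contributions. Making this cancellation uniform over all surfaces and all crossing patterns is the main obstacle.
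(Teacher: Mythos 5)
First, a point about the comparison you were asked for: this statement is not proved in the paper at all. It is D.~Thurston's theorem, imported by citation from \cite{Thurston}, and the paper's own results (Theorems \ref{r.main1}, \ref{thm.1f}, \ref{r.main2a}) go in the \emph{converse} direction, showing that any positive sequence must be an $R_+$-combination of the $T_k$. So there is no internal proof to measure your attempt against; it has to stand on its own as a proof of Thurston's theorem.

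Judged that way, it has a genuine gap --- one you name yourself. The reduction is fine as far as it goes: the state-sum expansion $b'b''=\sum_s \pm(-2)^{c(s)}D_s$ at $q=1$, the identification of $\cS_\BZ(\Sigma)$ with the $SL_2(\BC)$-character ring under which a knot $\al$ maps to $-\tr_\al$ and $T_n(\al)$ to $(-1)^n\tr_{\al^n}$, and the non-negativity of the change of basis from monomials to Chebyshev polynomials are all correct, and they are indeed the standard starting point. But the assertion that after reorganizing this alternating sum in $\cB_{(T_n)}$ every coefficient is non-negative \emph{is} the theorem, and your proposal does not prove it: saying that Chebyshev threading ``is exactly the normalization for which the turnback terms cancel,'' and that making this uniform over all surfaces ``is the main obstacle,'' restates the goal rather than supplying an argument. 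Note also that the trace identity cuts the wrong way for the induction you gesture at: $\tr(A)\tr(B)=\tr(AB)+\tr(AB^{-1})$ expands \emph{products} of traces positively, but to expand the trace of a single immersed, non-simple curve in bracelets one needs $\tr(AB)=\tr(A)\tr(B)-\tr(AB^{-1})$, which reintroduces a minus sign at every self-crossing; hence induction on self-intersection number is not manifestly positive, and this is precisely where the real work in Thurston's proof lies. The Frohman--Gelca formula on the torus is a consistency check, not a mechanism that propagates to other surfaces, since a general surface does not reduce to the torus case. In short: right framework, correct identification of the crux, but the crux --- which is the entire content of the statement --- is missing.
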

Here in this paper, Chebyshev's polynomials  $T_n(t)$ are defined recursively by
$$ T_0(t) =1, \ T_1(t)=t, T_2(t)= t^2-2, \quad T_n(t) = t T_{n-1}(t) - T_{n-2}(t) \quad \text{for \ } \ n \ge 3.$$
If $T_0$ is the constant polynomial 2, then the above polynomials are Chebyshev's polynomials of type 1. Here we have to set $T_0=1$ since by default, all normalized sequence begins with 1.

D. Thurston suggested the following conjecture, making precise the positivity conjecture.

\begin{conjecture}
\lbl{conj.1}
The sequence $(T_n)$ of Chebyshev's polynomials is positive over $\Zq$.
\end{conjecture}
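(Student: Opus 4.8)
The goal is to show that every structure constant of the Chebyshev basis $\cB_\bP$, with $\bP=(T_n)$, lies in $\BZ_+[q^{\pm1}]$. By Theorem \ref{r.Thurston} we already know that each such structure constant, viewed as a Laurent polynomial in $q$, evaluates to a nonnegative integer at $q=1$; the whole content of the conjecture is to \emph{upgrade} this numerical positivity to coefficientwise positivity in $q$. The first move is a reduction by locality. A basis element of $\cB_\bP$ is a product of Chebyshev-threaded simple closed curves along the components of a multicurve, and disjoint components contribute commuting, interaction-free factors; so the essential case is the product $T_a(\alpha)\,T_b(\beta)$ of two threaded simple closed curves (and, more generally, one threaded curve against an arbitrary basis element). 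Thus it suffices to control the $q$-expansion of such products when re-expressed in the Chebyshev basis, where all of the subtlety is concentrated in the intersection pattern of $\alpha$ and $\beta$.

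I would then attack the reduced problem along two complementary routes. The first is a quantum-torus route: embed $\cS_\Zq(\Sigma)$, via the Bonahon--Wong quantum trace map (or Muller's quantum cluster structure for punctured surfaces), into a quantum torus attached to an ideal triangulation. In a quantum torus the product of two monomials differs from a single Weyl-ordered monomial by one power of $q$, so positivity is manifest there; the Chebyshev-threaded curves are precisely the elements whose images are \emph{balanced}, and the plan is to show their products re-expand positively and then pull the positivity back. The second route is model-based: on the torus the Frohman--Gelca product-to-sum formula writes the product of $T_a$ on a $(p_1,q_1)$-curve with $T_b$ on a $(p_2,q_2)$-curve as a sum of two threaded curves with coefficients $q^{\pm\,|p_1q_2-p_2q_1|}$, which is manifestly positive and is exactly the phenomenon the conjecture predicts. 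I would take the torus, together with small building blocks such as the once-punctured torus and the four-holed sphere, as base cases with explicit positive formulas, and then try to propagate positivity across an ideal triangulation by a gluing argument, exploiting that Chebyshev threading is compatible with the surface cutting maps.

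The hard part will be the cancellation gap between $q=1$ positivity and coefficientwise positivity. Each crossing resolved by the Kauffman relation contributes both a $q$ and a $q^{-1}$ term, so the $q$-expansion of $T_a(\alpha)\,T_b(\beta)$ is an alternating-looking sum over resolutions, and the conjecture asserts that after regrouping into the Chebyshev basis every surviving coefficient is nonnegative. Proving this seems to demand a genuinely positive combinatorial model --- a monomial-positive, crystal-like, or canonical-basis description of the structure constants in which the Chebyshev normalization makes transparent why the negative $q$-powers must cancel. The quantum-trace route stumbles on non-surjectivity: the embedding is injective but its image is a proper subalgebra, and an element that expands positively in the quantum torus need not pull back to a positive combination of \emph{skein} basis elements, so controlling the image and its interaction with the Chebyshev normalization is delicate. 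I therefore expect the decisive obstacle to be this refined, coefficientwise positivity for two curves meeting in many points, for which no general positive model is yet available; settling it, in tandem with the necessity statement proved in this paper, would pin down the Fock--Goncharov basis exactly.
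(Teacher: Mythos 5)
You have set out to prove Conjecture \ref{conj.1}, which the paper itself does \emph{not} prove: it is stated as an open conjecture (attributed to D.~Thurston), and the paper's actual theorems (Theorem \ref{r.main1} and its strengthening, Theorem \ref{thm.1f}) run in the opposite direction. They show that \emph{if} a sequence is positive over $R$, then each $P_n$ must be an $R_+$-linear combination of $T_0,\dots,T_n$ --- that is, Chebyshev is the minimal candidate --- and they say nothing about whether the Chebyshev basis actually has structure constants in $\BZ_+[q^{\pm1}]$. So there is no proof in the paper to compare yours against; the only question is whether your argument stands on its own.

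It does not, and to your credit you say so yourself. What you have written is a research program, not a proof. The sound parts are the reduction to products of a single threaded curve against an arbitrary basis element (disjoint components commute, and positivity composes under induction on components --- though note that pairs of curves alone would not suffice), and the Frohman--Gelca product-to-sum formula on the torus, which is a genuine, manifestly positive base case. But both mechanisms you propose for going beyond the torus break at exactly the points you flag. The quantum-trace route fails because the Bonahon--Wong embedding has a proper subalgebra as its image: an element whose quantum-torus expansion is monomial-positive need not be a positive combination of the images of the $\cB_\bP$ basis elements, since the change of basis between skein basis images and Weyl-ordered monomials has no positivity control on its inverse. The gluing route fails for the same structural reason: the cutting homomorphism attached to an ideal triangulation is again an embedding, and positivity of a product computed in the cut pieces does not descend to positivity of coordinates in the basis of the glued surface; no known argument propagates structure-constant positivity across a triangulation. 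Your closing paragraph concedes that the decisive step --- coefficientwise (rather than $q=1$) positivity for two curves with many intersection points --- is unresolved, so the proposal leaves the conjecture exactly where the paper leaves it: open.
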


We will show
\begin{theorem} \lbl{r.main1}
Let  $R=\BZ$ or $R=\Zq$ and  $\bP= (P_n(t)_{n\ge 0})$ be  a sequence of polynomials in $R[t]$.
If\  $\bP$ is positive over $R$, then $P_n(t)$ is an $R_+$-linear combination of\  $T_0(t), T_1(t), \dots, T_n(t)$. Besides, $P_1(t)=T_1(t)=t$.
\end{theorem}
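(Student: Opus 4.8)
The plan is to derive all the required inequalities from the single case where $\Sigma$ is the torus $\BT^2$, where products can be computed explicitly via the Frohman--Gelca product-to-sum formula. Since $\bP$ is assumed positive over $R$, the basis $\cB_\bP$ is a positive basis for \emph{every} oriented surface, in particular for $\BT^2$; so it suffices to exhibit, on the torus, products of $\cB_\bP$-basis elements whose structure constants force the conclusion. Recall that on $\BT^2$ the essential simple closed curves are the primitive slopes $\gamma_{a,b}$ ($\gcd(a,b)=1$), any two distinct slopes intersect, and hence the multicurves are exactly the parallel families $k\,\gamma_{a,b}$. Writing $(a,b)_T$ for the Chebyshev threading of the underlying primitive slope, the product-to-sum formula reads $(a,b)_T\cdot(c,d)_T=q^{\,ad-bc}(a+c,b+d)_T+q^{\,bc-ad}(a-c,b-d)_T$, with coefficients positive monomials (robustly so, since the skein variable agrees with $q$ only up to a fixed monomial substitution, which preserves $R_+$). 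With $\alpha=\gamma_{1,0}$ and $\beta=\gamma_{0,1}$ this specializes, for $j\ge 1$, to $T_j(\alpha)\cdot\beta=q^{\,j}\gamma_{j,1}+q^{-j}\gamma_{j,-1}$, while $T_0(\alpha)\cdot\beta=\beta$.

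Write $P_n=\sum_{j=0}^n a_j T_j$ (possible since the $T_j$ are monic of degree $j$, and $a_n=1$); the goal is $a_j\in R_+$ for all $j$. Put $P_1(t)=t+c$, so the $\cB_\bP$-basis element of a primitive curve $\gamma$ is $b_\gamma:=P_1(\gamma)=\gamma+c$, giving $\gamma=b_\gamma-c$; the correction $-c$ only ever contributes to the coefficient of the empty link. I then compute on $\BT^2$ the product of the two basis elements $P_n(\alpha)$ (the family $n\,\alpha$ threaded by $P_n$) and $P_1(\beta)=\beta+c$:
\[
P_n(\alpha)\cdot P_1(\beta)=\Big(a_0\beta+\sum_{j=1}^n a_j\big(q^{\,j}\gamma_{j,1}+q^{-j}\gamma_{j,-1}\big)\Big)+c\,P_n(\alpha).
\]
The last summand is supported on parallel families of $\alpha$ together with the empty link, while the slopes $(j,1),(j,-1)$ $(1\le j\le n)$ and $(0,1)$ are pairwise distinct and distinct from all $(i,0)$. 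Rewriting each bare curve as $b_\gamma-c$, the coefficient of $b_{\gamma_{j,1}}$ is exactly $a_j q^{\,j}$ for $1\le j\le n$, and the coefficient of $b_\beta=b_{\gamma_{0,1}}$ is exactly $a_0$. Positivity of $\cB_\bP$ forces $a_j q^{\,j}\in R_+$ and $a_0\in R_+$; since multiplication by the monomial $q^{-j}$ preserves $R_+$, this yields $a_j\in R_+$ for every $0\le j\le n$, which is the first assertion.

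For the sharpened statement $P_1=t$, i.e. $c=0$, I compute the product of the basis elements $P_1(\alpha)=\alpha+c$ and $P_1(\beta)=\beta+c$, using $\alpha\cdot\beta=q\,\gamma_{1,1}+q^{-1}\gamma_{1,-1}$:
\[
(\alpha+c)(\beta+c)=q\,\gamma_{1,1}+q^{-1}\gamma_{1,-1}+c\,\alpha+c\,\beta+c^2 .
\]
Converting each $\gamma$ to $b_\gamma-c$, the coefficient of $b_\beta$ is $c$, while the coefficient of the empty link is $-q\,c-q^{-1}c-c^2=-c\,(q+q^{-1}+c)$. Positivity of $\cB_\bP$ demands both lie in $R_+$. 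From $c\in R_+$: if $c\ne 0$ then $q+q^{-1}+c$ is a nonzero element of $R_+$, so $c(q+q^{-1}+c)$ is a nonzero element of $R_+$ and its negative cannot lie in $R_+$; for $R=\BZ$ the same follows since $-c(2+c)\ge 0$ together with $c\ge 0$ forces $c=0$. Hence $c=0$ and $P_1=t$.

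The routine parts are the bookkeeping of the $-c$ corrections and the verification that the relevant slopes are pairwise distinct, so that the singled-out coefficients receive no hidden contributions. The point requiring care is the interface with the product-to-sum formula: one must use the correct degenerate value $T_0(\alpha)\cdot\beta=\beta$ (not the naive doubled expression), and one must keep track of the Chebyshev normalization, since the threading uses $T_0=1$ on the empty family whereas the product-to-sum recursion is cleanest in the type-$1$ normalization with constant term $2$ (the two conventions agree for all indices $\ge 1$, which is all that enters above). If one prefers to avoid citing product-to-sum, the identity $T_j(\alpha)\cdot\beta=q^{\,j}\gamma_{j,1}+q^{-j}\gamma_{j,-1}$ can be proved directly by induction on $j$ from the Chebyshev recursion and the single elementary resolution $\alpha\cdot\beta=q\,\gamma_{1,1}+q^{-1}\gamma_{1,-1}$.
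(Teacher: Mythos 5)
Your proof is correct, and it reaches the theorem by a route that differs from the paper's in scope and in the key lemma invoked. The paper actually proves a stronger statement (its Theorem \ref{thm.1f}): positivity of $\cB_\bP$ for a \emph{single} non-planar surface $\Sigma$ already forces the conclusion. It chooses two curves $z,z'\subset\Sigma$ meeting transversally in one point and, in place of your Frohman--Gelca formula, uses Proposition \ref{r.trans} from \cite{Le:central}, an identity in the skein module of the marked annulus, $\uu_0\bullet T_k(z)=q^k\uu_k+q^{-k}\uu_{-k}$; gluing the annulus into $\Sigma$ turns this into $z'\,T_k(z)=q^k z_{1,k}+q^{-k}z_{1,-k}$, where the curves $z_{1,\pm k}$ are pairwise non-isotopic nontrivial knots (checked via homology). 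Your identity $T_j(\alpha)\beta=q^j\gamma_{j,1}+q^{-j}\gamma_{j,-1}$ on $\BT^2$ is exactly the specialization of this to the torus, so the computational engine is the same product-to-sum phenomenon. What differs: (i) you work only on $\BT^2$, which suffices for Theorem \ref{r.main1} as stated, since positivity over $R$ includes positivity for the torus, but which cannot yield the single-surface refinement of Theorem \ref{thm.1f}; (ii) you split the conclusions over two products, $P_n(\alpha)\cdot P_1(\beta)$ giving $a_j\in R_+$ and $P_1(\alpha)\cdot P_1(\beta)$ giving $c=0$, whereas the paper extracts the positivity of the coefficients, of the constant $a$, and the vanishing of the empty-link coefficient all from the single product $P_1(z')P_n(z)$, then kills $a$ using that $c_n=1$ makes $c_0+\sum_{k\ge1}c_k(q^k+q^{-k})$ a nonzero element of $R_+$ while $R$ is a domain. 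Your endgame for $c=0$ invokes the same two ring axioms (the domain property and $R_+\cap(-R_+)=\{0\}$), with $q+q^{-1}+c\neq 0$ playing the role of the paper's nonzero sum. The two caveats you flag are the genuine ones, and you resolve them correctly: the Frohman--Gelca recursion is normalized with $T_0=2$, so it should only be quoted for indices $j\ge 1$ (with $T_0(\alpha)\beta=\beta$ handled separately), and the precise identification of the skein variable with $q$ is immaterial because the coefficients involved are monomials and multiplication by $q^{\pm 1}$ preserves $R_+$.
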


For the case $R=\BZ$, this result complements well Theorem \ref{r.Thurston} above, as they together claim that the sequence of Chebyshev polynomials is the minimal one in the set of positive sequence over $\BZ$.
For $R=\Zq$, our result says that the sequence of Chebyshev polynomial should be the minimal positive sequence.   A stronger version of Theorem \ref{r.main1} is proved in Section \ref{sec:2}.

\begin{remark}The positivity conjecture  considered here is  different from the one discussed in \cite{MSW}, which claims that every element of a certain basis is a $\BZ_+$-linear combination of monomials of cluster variables.
\end{remark}

\subsection{Marked surfaces} {\em A marked surface} is a  pair $\SM$, where $\Sigma$ is a compact oriented surface with (possibly empty) boundary $\pS$ and $\cP$ is a finite set in $\pS$.
G. Muller \cite{Muller} defined the skein algebra $\cS_R\SM$, extending the definition from surfaces to marked surfaces.
When $R=\BZ$, this algebra had been known earlier, and actually, the above mentioned result of D. Thurston (Theorem~\ref{r.Thurston}) was proved also for the case of marked surfaces.  However, there are two types of basis generators of the skein algebras, namely loops and arcs, and one needs two normalized sequences of polynomials $\bP$ and $\bQ$ to define an $R$-basis of the skein algebra $\cS_R\SM$. Here $\bP$ is applicable to  loops, and $\bQ$ is applicable to arcs,  see Section \ref{sec:marked}.
A pair of sequences of polynomials $(\bP,\bQ)$ are {\em positive over $R$} if they are normalized and the basis they generate is positive for any marked surface. D. Thurston result says that with $\bP= (T_n)$, the sequence of Chebyshev polynomials, and $\bQ=(Q_n)$
defined by $Q_n(t)=t^n$, the pair $(\bP,\bQ)$ are positive over $\BZ$. We obtained also an extension of Theorem \ref{r.main1} to the case of marked surface as follows.

\begin{theorem} \lbl{r.main2}
Let  $R=\BZ$ or $R=\Zq$. Suppose a pair $(\bP,\bQ)$ of sequences of  polynomials in $R[t]$, $\bP= (P_n(t)_{n\ge 0})$ and $\bQ= (Q_n(t)_{n\ge 0})$, are positive over $R$.
Then $P_n(t)$ is an $R_+$-linear combination of \ $T_0(t), T_1(t), \dots, T_n(t)$ and $Q_n(t)$ is an $R_+$-linear combination of $1, t,  \dots, t^n$. Moreover, $P_1(t)=Q_1(t)=t$.
\end{theorem}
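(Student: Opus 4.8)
The plan is to treat the two assertions of Theorem~\ref{r.main2} separately: reduce the loop assertion to Theorem~\ref{r.main1}, and prove the arc assertion by a skein computation that exhibits the monomial sequence $(t^n)$ as the arc-analogue of the Chebyshev sequence $(T_n)$. For the loop assertion, observe first that an unmarked surface $\Sigma$ is the marked surface $(\Sigma,\emptyset)$, for which $\cS_R(\Sigma,\emptyset)=\cS_R(\Sigma)$ and, there being no arcs, the basis produced by the pair $(\bP,\bQ)$ is exactly $\cB_\bP$. Hence positivity of $(\bP,\bQ)$ over every marked surface specializes, on the surfaces with empty marking, to positivity of $\bP$ over $R$ in the sense preceding Theorem~\ref{r.main1}. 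That theorem then yields at once that $P_n$ is an $R_+$-linear combination of $T_0,\dots,T_n$ and that $P_1(t)=t$.

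For the arc assertion the key case is $Q_1(t)=t$, which I would establish first; write $Q_1(t)=t+c$ with $c\in R$. The idea is to let a single arc meet a single loop once. Choose a marked surface (e.g. a one-holed torus with a marked point on its boundary) carrying an arc $\beta$ and a simple closed curve $\alpha$ meeting transversally in one point, arranged so that the two Kauffman smoothings $\gamma_1,\gamma_2$ of the crossing are essential arcs representing basis elements distinct from one another and from $\beta$. Resolving the crossing gives $\beta\,\alpha=q\,\gamma_1+q^{-1}\gamma_2$, with no trivial-circle term since there is a single crossing. Because $\alpha$ is a loop, its basis element is $P_1(\alpha)=\alpha$ (using $P_1(t)=t$, already proved), while $\beta,\gamma_1,\gamma_2$ are arcs with basis elements $Q_1(\cdot)=(\cdot)+c$. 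Substituting $\gamma_i=Q_1(\gamma_i)-c$ and $\alpha=P_1(\alpha)$ into $Q_1(\beta)P_1(\alpha)=(\beta+c)\alpha=\beta\alpha+c\,\alpha$ gives
\[
Q_1(\beta)\,P_1(\alpha)=q\,Q_1(\gamma_1)+q^{-1}Q_1(\gamma_2)+c\,P_1(\alpha)-(q+q^{-1})c,
\]
the last term being the coefficient of the empty skein $1=Q_0$. Positivity of this expansion forces $c\in R_+$ (from the $P_1(\alpha)$-coefficient) and $-(q+q^{-1})c\in R_+$ (from the $Q_0$-coefficient); together these force $c=0$, whether $R=\BZ$ or $R=\Zq$. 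Hence $Q_1(t)=t$.

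It remains to show that each $Q_n$ is an $R_+$-combination of $1,t,\dots,t^n$. Here the model is the bigon (a disk with two marked points), whose skein algebra has the geometric basis $\{\beta^{(k)}\}_{k\ge 0}$ of $k$ parallel copies of the unique essential arc $\beta$, with the purely monomial multiplication $\beta^{(i)}\beta^{(j)}=q^{\,f(i,j)}\beta^{(i+j)}$ where $q^{f(i,j)}\in R_+$ (parallel arcs stay parallel, each reordering at a marked point contributing only a power of $q$). Thus $(t^n)$ is the positive, and minimal, arc sequence in exactly the way $(T_n)$ is for loops. I would then run the induction on $n$ underlying the stronger form of Theorem~\ref{r.main1} from Section~\ref{sec:2}, with the monomials $t^k$ replacing the Chebyshev polynomials $T_k$: writing $Q_n=\sum_k a_{n,k}t^k$ with $a_{n,n}=1$, one expands suitable products — self-products in the bigon together with arc–loop resolutions as above — in the $Q$-basis, reads off the coefficients, and uses positivity to propagate $a_{n,k}\in R_+$ from smaller to larger $n$. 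The one local input that differs from the loop case is precisely the computation of the previous paragraph: an arc–loop crossing resolves with the positive coefficients $q,q^{-1}$ and, unlike the loop–loop case, produces no trivial-circle term, which is what replaces the Chebyshev correction by the plain monomial.

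The main obstacle is this last step: organizing the induction so that positivity alone pins down all of the $a_{n,k}$. The difficulty is that the bigon relations by themselves determine only the leading behaviour of $Q_n$ and leave the lower coefficients free, so one must feed in enough arc–loop (and arc–arc) crossing relations on larger marked surfaces to close the induction, while simultaneously tracking the $q$-powers so that every extracted structure constant lies in $R_+$ for $R=\Zq$ as well as in $\BZ_+$ for $R=\BZ$. Establishing $Q_1=t$ as above is the clean base case that makes the arc–loop resolutions usable, and the remainder proceeds in parallel with the loop argument of Section~\ref{sec:2}.
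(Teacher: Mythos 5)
Your reduction of the loop assertion is exactly the paper's: positivity of $(\bP,\bQ)$ on marked surfaces specializes, on a non-planar surface with empty marking, to positivity of $\bP$ alone, and Theorem \ref{r.main1} (in the form of Theorem \ref{thm.1f}) finishes that part. Your proof that $Q_1(t)=t$ is also correct, and it takes a genuinely different route from the paper: you resolve a single arc--loop crossing on a one-holed torus, where both smoothings are inner arcs and hence honest basis elements, so positivity of the coefficients of $P_1(\alpha)$ and of the empty lamination forces $c\in R_+$ and $-(q+q^{-1})c\in R_+$, whence $c=0$. The paper instead works on a disk $\BD_1$ with four marked points, where both smoothings of the single crossing are \emph{boundary} arcs, and passes to the quotient by the boundary-arc ideal (Lemmas \ref{r.quotient} and \ref{r.ideal}) so that the product $xy$ vanishes outright; the two arguments are equally valid, yours avoiding the quotient machinery at the cost of a more careful choice of surface.

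The genuine gap is the case $n\ge 2$ of the $Q_n$ assertion, and your own sketch concedes it (``the main obstacle is this last step''). Worse, the one concrete ingredient you propose, the bigon, gives no information at all: the unique essential arc of the bigon is isotopic into the boundary, and by the definition of $\cB_{\bP,\bQ}$ a \emph{boundary} $\cP$-arc enters the basis as the plain power $(C_i)^{n_i}$ regardless of $\bQ$. So positivity on the bigon imposes no constraint whatsoever on the coefficients of $Q_n$; it cannot serve as the arc-analogue of the annulus computation, and the induction you hope to run has no base beyond $Q_1$.

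What is missing is the paper's construction on the disk $\BD_n$ with $2n+2$ marked points $p_0,\dots,p_{n+1},q_n,\dots,q_1$: take the inner arc $x=p_0p_{n+1}$ and the system $y_n$ of $n$ disjoint inner arcs $p_iq_i$, each crossing $x$ once, and work modulo the two-sided ideal $I$ generated by the boundary arcs $\gamma_0,\dots,\gamma_{n-1}$, which respects $\cB_{\bP,\bQ}$ by Lemma \ref{r.ideal}. The key point (Lemma \ref{r.phu2}) is that any resolution of the $kn$ crossings of $x^k y_n$ containing at least one positive smoothing produces either a trivial arc or one of the $\gamma_i$, hence dies modulo $I$, so that $x^k y_n \equiv q^{-kn} z_{k,n} \pmod I$ where $z_{k,n}$ is the all-negative resolution. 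Since $Q_1=t$ makes $y_n$ and each $z_{k,n}$ a basis element, and the $z_{k,n}$ for $k=0,1,\dots,n$ are pairwise distinct and not in $I$, expanding
\[
Q_n(x)\, y_n \;\equiv\; \sum_{k=0}^n c_k\, q^{-kn} z_{k,n} \pmod I
\]
and applying Lemma \ref{r.quotient} reads off $c_k q^{-kn}\in R_+$, hence $c_k\in R_+$, for \emph{all} $k$ simultaneously --- no induction on $n$ is needed. This single multi-crossing computation, which separates the powers $x^0,\dots,x^n$ into distinct surviving basis elements, is precisely the idea your proposal lacks.
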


\subsection{Acknowledgment} The author would like to thank D. Thurston for valuable discussions. The author also thanks the anonymous referees for useful comments which in particular lead to the current, stronger  formulation of Theorem \ref{thm.1f}.
This work is supported in part by the NSF.
\subsection{Plan of the paper} In Section \ref{sec:prelim} we recall basic facts about the Kauffman bracket skein algebras of surfaces. We present the proofs of Theorem~\ref{r.main1} and Theorem~\ref{r.main2} in respectively Section~\ref{sec:2} and Section~\ref{sec:marked}.

\section{Skein algebras} \label{sec:prelim} We recall here basic notions concerning the Kauffman bracket skein algebra of a surface.

\subsection{Ground ring} Throughout the paper we work with a ground ring $R$ which is more general than $\BZ$ and $\Zq$. We will assume that 
 $R$ is a commutative domain over $\BZ$ containing an invertible element $q$ and a subset $R_+$ such that
  \begin{itemize}
  \item $R_+$ is closed under addition and multiplication
  \item $q, q^{-1} \in R_+$, and
  \item $R_+ \cap (- R_+)= \{0\}$.
  \end{itemize}

  For example, we can take $R= \BZ$ with $q=1$ and $R_+= \BZ_+$, or $R=\Zq$ with $R_+= \BZ_+[q^{\pm 1}]$. The reader might think of $R$ as one of these two rings.

\subsection
{Kauffman bracket skein algebra} Suppose $\Sigma$ is an oriented surface.
The {\em Kauffman bracket skein module}  $\cS_R(\Sigma)$ is  the $R$-module freely spanned by isotopy classes of non-oriented framed links in $\Sigma \times [0,1]$ modulo the {\em skein relation} and  the {\em trivial loop relation} described in
Figure \ref{fig:skein}.
 \FIGc{skein}{Skein relation (left) and trivial loop relation}{2.8cm}
In all Figures, framed links are drawn with blackboard framing. More precisely, the  trivial loop relation says
 if  $L$ is a loop  bounding a disk in $\Sigma\times[0,1]$ with framing perpendicular to the disk, then
 $L= -q^2 -q^{-2}.$ And
the skein relation says $$
 L  = q L_+ + q^{-1} L_-
 $$ if
   $L, L_+, L_-$ are  identical except in a ball in which they look like
     in Figure \ref{fig:skein1}.
     \FIGc{skein1}{From left to right:  $L, L_+, L_-$.}{1.9cm}

For future reference,  we say that the diagram $L_+$ (resp. $L_-$) in Figure \ref{fig:skein1} is obtained from $L$ by the positive (resp. negative) resolution of the crossing.

\def\cF{\mathcal F}
\def\An{{\mathbb  A}}
\newcommand{\vect}[1]{\overrightarrow{#1}}
\def\cP{\mathcal P}


The $R$-module $\cS_R(\Sigma)$
has an algebra structure where the product of two links $\al_1$ and $\al_2$ is  the result of stacking $\al_1$  atop  $\al_2$ using the cylinder structure of $\Sigma \times [0,1]$. Over $R=\BZ$, the skein algebra $\cS_R(\Sigma)$ is commutative and is closely related to the $SL_2(\BC)$-character variety of $\Sigma$, see \cite{Bullock,PS,Turaev:skein,BFK}. Over $R=\Zq$, $\cS_R(\Sigma)$ is not commutative in general and is closely related to quantum Teichm\"uller spaces \cite{CF}.

\subsection{Bases} We will consider $\Sigma$ as a subset of $\Sigma\times [0,1]$ by identifying $\Sigma$ with $\Sigma \times \{1/2\}$.  As usual, links in $\Sigma$, which are  closed 1-dimensional non-oriented submanifolds of $\Sigma$,  are considered up to ambient isotopies of $\Sigma$. A link $\al$ in $\Sigma$ is  {\em an essential } if it has no trivial component, i.e. a component bounding a disk in $\Sigma$. By convention, the empty set is considered an essential link. The framing of a link is
{\em vertical} if at every point $x$, the framing is parallel to $x \times [0,1]$, with the direction equal to the positive direction of $[0,1]$.

By \cite[Theorem 5.2]{PS},  $\cS_R(\Sigma)$ is  free over $R$ with  basis the set of all isotopy classes of essential links in $\Sigma$ with vertical framing.
This basis can be parameterized as follows.  An {\em integer lamination} of $\Sigma$ is an unordered collection $\mu=(n_i,C_i)_{i=1}^m$, where

\begin{itemize}
\item each $n_i$ is a positive integer
\item each $C_i$ is a non-trivial  knot in $\Sigma$
\item no two $C_i$ intersect
\item no two $C_i$ are ambient isotopic.
\end{itemize}

For each integer lamination $\mu$, define an element $b_\mu\in \cS_R(\Sigma)$ by
$$ b_\mu= \prod_{i} (C_i)^{n_i}.$$
Then the set of all $b_\mu$, where $\mu$ runs the set of all integer laminations including the empty one, is the above mentioned basis of $\cS_R(\Sigma)$.

Suppose  $\bP=(P_n(t)_{n\ge 0})$ is a normalized sequence of polynomials in  $R[t]$. Then we can twist the basis element $b_\mu$ by $\bP$ as follows. Let
$$ b_{\mu,\bP} := \prod_{i} P_{n_i}(C_i).$$
As $\{ P_n(t)\} \mid n \in \BZ_+ \}$, just like $\{ z^n\}$, is a basis of $R[t]$, the set $\cB_\bP$ of all $b_{\mu,\bP}$, when $\mu$ runs the set of all integer laminations, is a free $R$-basis of $\cS_R(\Sigma)$.

\section{Theorem \ref{r.main1} and its stronger version}
\lbl{sec:2}

We present here the proof of a stronger version of Theorem \ref{r.main1}, using a result from \cite{Le:central} which we recall first.

\subsection{Skein algebra of the  annulus} Let $\An \subset \BR^2$ be the annulus $\An= \{   x \in \BR^2,  1 \le | x| \le 2\}$.
Let $z$ be the core of the annulus defined by $z = \{ {x}, |x | = 3/2\}$.
It is easy to show that, as an algebra,  $ \cS_R(\An) = R[z]$.

\def\ppp{$(p_1 \!\!
\leftrightarrow
\!\!
 p_2)$}
 \def\ppp{$\Aio$}

\label{sec.2}
\def\Aio{{\mathbb A_{io}}}
\def\An{\mathbb A}

Let $p_1= (0,1)\in \BR^2$ and $p_2=(0,2)\in \BR^2$, which are points in $\partial \An$. Then
$\Aio=(\An, \{p_1, p_2\})$ is an example of a marked surface. See Figure \ref{fig:Aio}, which also depicts  the arcs $\uu_0$, $\uu_{-1}$, $\uu_{2}$. Here $\uu_0$ is the straight segment $p_1p_2$, and $\uu_n$, for $n\in \BZ$,  is an arc properly embedded in $\An$ beginning at $p_1$ and winding clockwise (resp. counterclockwise)  $|n|$ times if $n\ge 0$ (resp. $n < 0$) before getting to $p_2$.
\FIGc{Aio}
{Marked annulus $\Aio$, arcs $\uu_0$, $\uu_{-1}$, $\uu_{2}$, and element $\uu_0\bullet z$}{3cm}

 {A \em \ppp-arc} is a proper embedding of  the interval $[0,1]$  in $\An \times [0,1]$ equipped with a framing such that one end point is in $p_1 \times [0,1]$ and the other is in $p_2 \times [0,1]$, and  the framing is vertical at both end points.  {\em A \ppp-tangle} is a disjoint union of a \ppp-arc and a (possibly empty) framed link in $\An \times [0,1]$. Isotopy of \ppp-tangles are considered in the class of \ppp-tangles.

Let $\cS'_R(\Aio)$ be the $R$-module spanned by isotopy classes of \ppp-tangles modulo the usual skein relation and the trivial knot relation. As usual, each $\uu_n$ is equipped with the vertical framing, and is considered as an element of $\cS'_R(\Aio)$.

For $\al\in \cS_R(\An)$ let $\uu_0 \bullet \al\in \cS'_R(\Aio)$ be the element obtained by placing $\uu_0$ on top of $\al$, see Figure~\ref{fig:Aio} for an example. In \cite{Le:central} we proved the following.
\begin{proposition}
\lbl{r.trans}
For any integer $n \ge 1$, we have
\be \lbl{eq.1}
\uu_0 \bullet T_n(z) = q^n \uu_n + q^{-n} \uu_{-n}.
\ee
\end{proposition}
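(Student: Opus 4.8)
The plan is to prove the identity $\uu_0 \bullet T_n(z) = q^n \uu_n + q^{-n}\uu_{-n}$ by induction on $n$, using the Chebyshev recursion $T_n = t\,T_{n-1} - T_{n-2}$ together with an explicit skein-theoretic resolution of the product $\uu_0 \bullet z$. First I would compute the base cases directly. Since $T_1(t)=t$, the statement for $n=1$ reduces to resolving the single crossing created when the core $z$ is placed beneath the straight arc $\uu_0$. Applying the skein relation once should yield exactly $\uu_0 \bullet z = q\,\uu_1 + q^{-1}\uu_{-1}$, where the two resolutions send the arc winding once clockwise and once counterclockwise respectively, with the weights $q$ and $q^{-1}$ coming from the positive and negative resolutions. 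For $n=2$, using $T_2(t)=t^2-2$, one resolves $\uu_0\bullet z^2$: the two crossings with the double core produce four terms, of which the two "middle" terms create a trivial loop (contributing $-q^2-q^{-2}$) or an arc isotopic to $\uu_0$, and after cancellation against the $-2\uu_0$ correction coming from the $-2$ in $T_2$, one is left with $q^2\uu_2 + q^{-2}\uu_{-2}$.

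The inductive step is the heart of the argument. Assuming the formula holds for $n-1$ and $n-2$, I would apply $\uu_0\bullet$ to both sides of $T_n(z) = z\,T_{n-1}(z) - T_{n-2}(z)$. The term $\uu_0 \bullet T_{n-2}(z)$ is handled directly by the induction hypothesis. For the term $\uu_0\bullet\bigl(z\,T_{n-1}(z)\bigr)$, the key move is to understand how the extra factor of $z$ interacts with the arcs $\uu_{n-1}$ and $\uu_{-(n-1)}$ furnished by the induction hypothesis. Concretely, one wants a "winding" relation of the form
\[
\uu_k \bullet z = q\,\uu_{k+1} + q^{-1}\uu_{k-1}
\]
obtained by sliding the core $z$ across the arc $\uu_k$ and resolving the single resulting crossing. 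Granting this, one computes
\[
\uu_0\bullet\bigl(z\,T_{n-1}(z)\bigr)
  = q^{n-1}\bigl(q\,\uu_n + q^{-1}\uu_{n-2}\bigr)
  + q^{-(n-1)}\bigl(q\,\uu_{-(n-2)} + q^{-1}\uu_{-n}\bigr),
\]
and subtracting $\uu_0\bullet T_{n-2}(z) = q^{n-2}\uu_{n-2} + q^{-(n-2)}\uu_{-(n-2)}$ cancels precisely the $\uu_{\pm(n-2)}$ contributions, leaving $q^n\uu_n + q^{-n}\uu_{-n}$, as desired.

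The main obstacle I anticipate is establishing the single-crossing winding relation $\uu_k\bullet z = q\,\uu_{k+1} + q^{-1}\uu_{k-1}$ cleanly, and in particular justifying that the two resolutions genuinely yield the arcs $\uu_{k+1}$ and $\uu_{k-1}$ with no spurious trivial loops for all $k$. One must check that placing the core $z$ beneath $\uu_k$ and pushing it through creates exactly one crossing whose positive resolution increases the winding number by one and whose negative resolution decreases it, and that the framings remain vertical at the endpoints so the result is a genuine $\Aio$-arc. A careful picture in the annulus, tracking orientation of the resolutions against the clockwise/counterclockwise winding convention fixed in the definition of $\uu_n$, should settle this; the sign bookkeeping of $q$ versus $q^{-1}$ is where errors are easiest to make, so I would verify it against the explicitly computed base cases $n=1,2$ for consistency before completing the induction.
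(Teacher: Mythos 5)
Your argument is correct, but note that the paper contains no proof of Proposition \ref{r.trans} to compare against: the statement is quoted from \cite{Le:central} (``In \cite{Le:central} we proved the following''), so what your induction buys is a self-contained skein-theoretic proof where the paper offers only a citation. The plan is sound, and it correctly treats the two genuinely delicate points: since this paper normalizes $T_0=1$ (not $2$), the recursion $T_n=tT_{n-1}-T_{n-2}$ holds only for $n\ge 3$, so $n=1$ and $n=2$ must both be base cases, exactly as you have them; and the crux is the winding relation $\uu_k\bullet z=q\,\uu_{k+1}+q^{-1}\uu_{k-1}$, which does hold for every $k\in\BZ$ with coefficients independent of $k$, because $\uu_k$ can be isotoped to meet the core $z$ in a single point near which the crossing picture is identical to the $k=0$ case, and because arcs from $p_1$ to $p_2$ in the annulus are classified up to isotopy by their winding number, so each smoothing simply inserts one full clockwise or counterclockwise loop. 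Two repairs are needed. First, in your $n=2$ computation no trivial loop can occur: representing $z^2$ by two disjoint parallel copies of the core, the four resolutions of $\uu_0\bullet z^2$ give exactly $q^2\uu_2+2\uu_0+q^{-2}\uu_{-2}$, and the $2\uu_0$ is cancelled by the $-2$ in $T_2$; your parenthetical alternative ``create a trivial loop (contributing $-q^2-q^{-2}$)'' is spurious and should be deleted. Second, your inductive step silently uses that $\bullet$ is an associative right action of the commutative algebra $\cS_R(\mathbb{A})=R[z]$, i.e. $(\uu_0\bullet\al_1)\bullet\al_2=\uu_0\bullet(\al_1\al_2)$ (both sides being $\uu_0$ stacked over $\al_1$ stacked over $\al_2$); this is immediate from the definition but should be stated, since it is precisely what allows the extra factor of $z$ in $\uu_0\bullet\bigl(zT_{n-1}(z)\bigr)$ to act on the arcs $\uu_{\pm(n-1)}$ produced by the induction hypothesis.
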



\def\Tp{{\mathbb T}_{\mathrm{punc}}}

\def\BD{\mathbb D}
\def\All{\An_{oo}}
\def\cD{\mathcal D}

\def\cH{\mathcal H}
\def\Smm{\Sigma_{1,1}}

\subsection{Stronger version  of Theorem \ref{thm.1f}}

We say that a sequence $\bP=(P_n)$ of normalized polynomials in $R[t]$ is {\em positive for $\Sigma$} over $R$ if  $\cB_\bP$ is a positive basis for $\cS_R(\Sigma)$. Thus, $\bP=(P_n)$ is positive if and only if it is positive for any oriented surface.

Theorem \ref{r.main1} follows from the following stronger statement.

 \begin{theorem}  \lbl{thm.1f}
 Suppose  $\bP=(P_n(t))$ is positive for a non-planar oriented surface $\Sigma$. Then for every $n \ge 0$, $P_n(t)$ is an $R_+$-linear combination of $T_k(t)$ with $k \le n$.
 Besides, $P_1(t)=t$.
 \end{theorem}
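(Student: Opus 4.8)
The plan is to transport Proposition~\ref{r.trans} from the marked annulus $\Aio$ into $\Sigma$ via a closing-up map, and then read the positivity of the structure constants of $\cB_\bP$ directly off the clean two-term expansions that Proposition~\ref{r.trans} produces. First I would set up the geometry. Since $\Sigma$ is non-planar it contains an embedded one-holed torus, and in particular an embedded annulus $\An$ whose core $C$ is an essential simple closed curve, together with a properly embedded arc $\gamma$ in the complement joining the two marked points $p_1,p_2$ and running through a handle. Capping the arcs of $\Aio$-tangles with $\gamma$ (placed at the top level) defines an $R$-linear map $\iota\colon\cS'_R(\Aio)\to\cS_R(\Sigma)$ with $\iota(z)=C$, $\iota(\uu_n)=D_n$, and $\iota(\uu_0\bullet\alpha)=E\cdot\iota(\alpha)$, where $E:=D_0$ and the product is the stacking product of $\cS_R(\Sigma)$. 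Because $\gamma$ runs through a handle, the curves $\{D_n\}$ are essential, realize pairwise distinct slopes on the one-holed torus (hence are pairwise non-isotopic), and are transverse to $C$. Applying $\iota$ to \eqref{eq.1} then yields, for every $n\ge 1$,
$$ E\cdot T_n(C)=q^n D_n+q^{-n}D_{-n}\qquad\text{in } \cS_R(\Sigma). $$

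Next I would write each $P_n$ in the Chebyshev basis as $P_n=\sum_{k=0}^n a_{n,k}T_k$ with $a_{n,n}=1$ and $a_{n,k}\in R$; the goal is $a_{n,k}\in R_+$ (the case $n=0$ being trivial since $P_0=T_0=1$). I would first settle $P_1$. As $P_1$ is monic of degree one, $P_1(t)=t+c_1$. Since $E$ and $C$ cross, $\{(1,E),(1,C)\}$ is not a lamination, so $P_1(E)\,P_1(C)$ is a genuine product of two basis elements of $\cB_\bP$. Using $E\cdot C=qD_1+q^{-1}D_{-1}$ (the $n=1$ case above) and the substitution $X=P_1(X)-c_1$ for each curve $X$, expand $P_1(E)P_1(C)=EC+c_1E+c_1C+c_1^2$ in $\cB_\bP$. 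The coefficient of the basis element $P_1(E)$ is $c_1$, while the coefficient of the empty lamination is $-c_1(q+q^{-1}+c_1)$. Positivity forces both into $R_+$; since $c_1\in R_+$ and $q+q^{-1}\in R_+$, also $c_1(q+q^{-1}+c_1)\in R_+$, so by $R_+\cap(-R_+)=\{0\}$ this element vanishes. As $R$ is a domain, either $c_1=0$ or $c_1=-(q+q^{-1})$, and the latter again forces $c_1\in R_+\cap(-R_+)=\{0\}$. Hence $c_1=0$, i.e. $P_1(t)=t$.

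With $P_1(t)=t$ established, every single essential curve $D$ is itself a basis element $P_1(D)=b_{(1,D),\bP}$ of $\cB_\bP$. Fixing $n\ge1$, I would consider the product of the two basis elements $E=b_{(1,E),\bP}$ and $P_n(C)=b_{(n,C),\bP}$. By the displayed identity,
$$ E\cdot P_n(C)=\sum_{k=0}^n a_{n,k}\,E\cdot T_k(C)=a_{n,0}D_0+\sum_{k=1}^n a_{n,k}\bigl(q^k D_k+q^{-k}D_{-k}\bigr). $$
Because $D_{-n},\dots,D_n$ are $2n+1$ pairwise non-isotopic essential curves, the $D_k$ are distinct basis elements and no others occur. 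Positivity of $\cB_\bP$ then isolates each coefficient in $R_+$: the coefficient of $D_0$ is $a_{n,0}\in R_+$, and for $1\le k\le n$ the coefficient of $D_k$ is $q^k a_{n,k}\in R_+$, whence $a_{n,k}=q^{-k}(q^k a_{n,k})\in R_+$. Thus $P_n$ is an $R_+$-linear combination of $T_0,\dots,T_n$, which completes the argument.

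The hard part is the geometric step of the first paragraph: producing, inside an arbitrary non-planar $\Sigma$, the capping arc $\gamma$ so that the closed-up curves $D_n$ are simultaneously essential, pairwise non-isotopic, and organized by Proposition~\ref{r.trans} into the clean two-term expansions above, together with checking that $\iota$ is well defined and compatible with $\bullet$. This is precisely where non-planarity is used: on a planar surface the closed-up arcs would be isotopic or bound disks, so the decisive distinctness of the $D_n$—which is what turns each positivity constraint into an isolated coefficient—would fail. Once this configuration is in hand, the remaining steps are routine manipulations with the axioms of $R_+$.
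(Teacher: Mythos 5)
Your proof is correct and takes essentially the same route as the paper: both arguments hinge on transporting Proposition~\ref{r.trans} into $\Sigma$ via two simple closed curves meeting once in a handle (your curves $D_k$ are the paper's $z_{1,k}$, shown pairwise non-isotopic and non-trivial by homology in $\Sigma$), and then reading positivity of coefficients off the resulting expansion in $\cB_\bP$. The only organizational difference is that you first settle $P_1(t)=t$ (which is exactly the paper's computation specialized to $n=1$) and then use it to avoid constant-term bookkeeping for general $n$, whereas the paper runs a single computation of $P_1(z')P_n(z)$ that extracts the constant $a$ and the Chebyshev coefficients $c_k$ simultaneously.
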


 \begin{proof}  Since $\Sigma$ is non-planar, there are two simple closed curves $z,z'\subset \Sigma$ which intersect transversally at one point. We identify a small tubular neighborhood of $z$ with the annulus $\An$ such that $z'\cap \An$ is the segment $p_1 p_2$, see Figure \ref{fig:pTor}.  Note that $z$ and $z'$, as homology classes in $H_1(\Sigma,\BZ)$,  are linearly independent over $\BZ$.
 
 \no{
 We  present $\Smm$ as the union
$\Smm = \An \cup \An'$, where  $\An'$ is an annulus with core $z'$, as in Figure~\ref{fig:pTor}, with  $z'\cap \An$ equal to the arc $p_1p_2$.   Recall that $z$ is the core of the annulus $\An$.
}

\FIGc{pTor}{The union $\An \cup z'$. The bold arc is  $z'\setminus \An$.}{3cm}

 By definition 
 $P_1(t)=t+a$, $a \in R$.
 Fix an integer $n\ge 1$. There are $c_k \in R$ such that
 $$ P_n(t) = \sum_{k=0}^n c_k T_k(t).$$
 Let  $z_{1,k}$ be the curve $(z' \setminus \An)\cup \uu_k$. We have
\begin{align} \lbl{eq.6}
  P_1(z') P_n(z) & = a P_n(z) + z'\sum_{k=0}^n c_k  T_k(z) = a P_n(z) +  c_0 z' + \sum_{k=1}^n c_k z' T_k(z) \nonumber \\
  & = a P_n(z) +  c_0 z' + \sum_{k=1}^n c_k   \left( q^k z_{1,k}  +   c_kq^{-k} z_{1,-k}\right) \end{align}
 where the last equality follows from  Proposition \ref{r.trans}. Using homology, one  sees that in the collection
$\{ z, z', z_{1,k}, z_{1,-k} \mid  k \ge 1\}$  every curve is a  non-trivial knot in $\Sigma$, and any two of them are non-isotopic.
 Rewriting $t = P_1(t) -a$, we have
 \begin{align*}
 P_1(z') P_n(z) & = a P_n(z) +  c_0  (P_1(z') - a) + \sum_{k=1}^n c_k \left [ q^k (P_1( z_{1,k}) -a) + q^{-k} (P_1(z_{1,-k})-a)  \right]\\
 &  = a P_n(z) +  c_0  \, P_1(z')   + \sum_{k=1}^n c_k \left [ q^k (P_1( z_{1,k}) + q^{-k} (P_1(z_{1,-k})  \right]  - a c_0  -\sum_{k=1}^n  a c_k (q^k + q^{-k}).
 \end{align*}
By the positivity, the coefficients of $P_1(z')$,  $P_n(z)$, $P_1(z_{1,k})$ (for every $k \ge 1$) and the constant coefficient are in $R_+$. This shows that $a, c_k \in R_+$ for every $k$, and
\be
d:=- a c_0  -\sum_{k=1}^n  a c_k (q^k + q^{-k}) \in R_+.
\nonumber
\ee
Note that $-d = a( c_0 + \sum_{k=1}^n c_k (q^k + q^{-k})) \in R_+$, since $a, c_k \in R_+$.
Thus, both  $d$ and $-d$ are in $R_+$. Then $d=0$ by the assumption on $R$.  It follows that
\be
\lbl{eq.neq0}
a(  c_0 + \sum_{k=1}^n c_k (q^k + q^{-k})) =0.
\ee
{\bf Claim.} Suppose $x,y\in R_+$ such that $x+y=0$. Then $x=y=0$.  \\
{\em Proof of Claim.}  We have $x=-y$. Thus, $y$ and $-y=x$ are in $ R_+$, implying $y=0$. Then $x=0$.

The claim means that if $x,y \in R_+$ and $x\neq 0$, then $x+y \neq 0$. In particular, $q^n+q^{-n} \neq 0$.  Note that $c_n=1$ because $P_n(t)$ is a monic polynomial. All the summands in the sum
 $$ c_0 + \sum_{k=1}^n c_k (q^k + q^{-k}) $$
 are in $R_+$, and the summand with $k=n$ is non-zero. Hence the above sum is non-zero. Since $R$ is a domain, from \eqref{eq.neq0} we conclude that $a=0$. This completes the proof of the theorem.
 \end{proof}

 \no{
   Suppose $\Sigma$ is a non-planer oriented connected surface. Then there is an embedding $\Sigma_{1,1} \hookrightarrow \Sigma$ which is injective on homology. 
   Since every non-planar surface contains a  $\Sigma_{1,1}$, we have the following consequence, which was pointed out to the author by the referee.
   
   \begin{corollary}
   Suppose  $\bP=(P_n(t))$ is positive for a non-planar surface. Then for every $n \ge 0$, $P_n(t)$ is an $R_+$-linear combination of $T_k(t)$ with $k \le n$.
 Besides, $P_1(t)=t$. \end{corollary}
 }

\newcommand{\qbinom}[2]{\left[\begin{matrix}
#1 \\ #2
\end{matrix}  \right]}

\def\sq{Q}
\def\pT{\text{p-$\mathbb T$}}
\def\En{{ _nE}}

\section{Marked surfaces}
\lbl{sec:marked}

\subsection{Skein algebra of marked surfaces} Suppose $\SM$ is a marked surface, i.e. $\cP\subset \pS$ is a finite set.
{\em A   framed 3D $\cP$-tangle $\al$ in $\Sigma\times [0,1]$} is a framed proper embedding of a 1-dimensional non-oriented compact manifold into $\Sigma\times [0,1]$ such that $\partial \al \subset \cP \times [0,1]$, and the framing at every boundary point of $\al$ is vertical.
 Two  framed 3D $\cP$-tangles are {\em isotopic} if they are
 isotopic through the class of framed 3D $\cP$-tangles.

 Just like the link case, a framed 3D $\cP$-tangle $\al$  is depicted by its diagram on $\Sigma$, with vertical framing on the diagram. Here a diagram of $\al$ is a projection of $\al$ onto $\Sigma$ in general position, with an order of strands at every crossing. Crossings come in two types. If the crossing is in
  $\Sigma \setminus \cP$, then it is a usual double point (with usual over/under information). If the crossing is a point in $\cP$,  there may be two or more number of strands, which are ordered. The order indicates which strand is above which. When there are two strands, the lower one is depicted by a broken line, see e.g. Figure \ref{fig:arc}.

 Let $\cS_R\SM$ be the $R$-module spanned by the set of isotopy classes of framed 3D $\cP$-tangles in $\Sigma\times [0,1]$ modulo the skein relation, the trivial knot relation (Figure~\ref{fig:skein}), and the trivial arc relation of Figure~\ref{fig:arc}.
 \FIGc{arc}{Trivial arc relation: If  a framed 3D $\cP$-tangle $\al$ has a trivial arc, then $\al=0$.
 }{2cm}

The following relation holds, see  \cite{Le:qtrace}.
\begin{proposition} \lbl{r.boundary}
In $\cS\SM$, the reordering relation depicted in Figure \ref{fig:boundary} holds.
\end{proposition}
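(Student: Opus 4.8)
The plan is to derive the reordering relation entirely from the two local moves already available: the Kauffman skein relation $L = qL_+ + q^{-1}L_-$ of Figure~\ref{fig:skein1} and the trivial arc relation of Figure~\ref{fig:arc}. The reordering relation compares two diagrams that differ only in the height order of two strands ending at a single marked point $p \in \cP$; since everything is local, I would fix a small ball neighborhood of $p \times [0,1]$ and work inside it, treating the rest of the tangle as a fixed ``outside'' that is carried along unchanged.

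First I would pull the two endpoints slightly off $\cP$ into the interior of $\Sigma$. Because both strands terminate at the same point $p$ and their relative height is recorded by the prescribed ordering, after this perturbation the two strands can be isotoped so that they cross transversally at exactly one interior double point, with over/under information dictated by that order. This converts the boundary-ordering data into an honest interior crossing, to which the ordinary skein relation of Figure~\ref{fig:skein1} now applies.

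Next I would resolve that crossing, writing the diagram as $q$ times the positive resolution plus $q^{-1}$ times the negative resolution, with the roles of $L_+$ and $L_-$ fixed by the sign of the crossing. The key observation is that the two resolutions behave very differently near the boundary: one of them reconnects the two strands so that, after a small isotopy, they again terminate at $p$ but in the \emph{opposite} height order, yielding exactly the reordered diagram; the other resolution produces a short arc cutting off a disk against the boundary near $p$, i.e.\ a trivial arc, so that term vanishes by the trivial arc relation of Figure~\ref{fig:arc}. Only the reordered diagram survives, multiplied by the corresponding power of $q$, and this is precisely the asserted relation.

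I expect the main subtlety to lie in the last step: verifying that exactly one resolution yields a genuinely trivial arc (one bounding a disk, so that Figure~\ref{fig:arc} applies) while the other is isotopic rel endpoints to the swapped configuration, and matching the surviving power of $q$ to the sign of the introduced crossing and to the vertical-framing conventions at $p$. Once the local picture is drawn carefully these become routine isotopy checks, but that is exactly where all the content of the statement is concentrated.
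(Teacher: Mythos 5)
Your overall strategy is correct, and it is in fact the intended one: the paper does not prove Proposition \ref{r.boundary} at all but defers to \cite{Le:qtrace}, where the reordering relation is verified by exactly your scheme --- trade the height order at the marked point for an interior crossing, resolve it by the skein relation, and observe that one resolution contains a trivial arc and dies. Your analysis of the two resolutions is also right: at a crossing adjacent to $p$, the four local ends are the two short ends running into $p$ and the two ends running off into the rest of the tangle; the resolution pairing the two $p$-ends produces an arc from $p$ back to $p$ bounding a small disk, so that term vanishes by the trivial arc relation of Figure \ref{fig:arc}, while the other resolution pairs each $p$-end with a far end, exchanging the far ends of the two strands. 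Since the heights of the $p$-ends themselves are untouched by a resolution, this surviving term is isotopic to the height-swapped diagram, and it appears with coefficient $q$ or $q^{-1}$ according to the sign of the crossing.

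The one step you should not leave as written is the first: ``pulling the two endpoints slightly off $\cP$ into the interior of $\Sigma$'' is not a legal move. A framed 3D $\cP$-tangle must have its boundary in $\cP\times[0,1]$, and a tangle with free interior endpoints is not an element of $\cS_R\SM$, so no identity in the skein algebra can be derived by passing through such objects. The detour is also unnecessary: keep both endpoints in $p\times[0,1]$ and isotope one strand across the other near $p$. Because the two endpoints sit at distinct heights and the framing there is vertical, the strand with the higher endpoint can be swung across the lower one while staying above it; this is an honest isotopy of framed $\cP$-tangles which swaps the angular positions of the two ends at $p$ in the projection and creates exactly one interior crossing, whose over/under data is forced by the height order. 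That crossing diagram is the one your second paragraph starts from, and note that no framing correction arises, since the crossing created is between two distinct strands. With that repair, and after pinning down the conventions of Figure \ref{fig:skein1} to decide whether the surviving coefficient is $q$ or $q^{-1}$ (the point you already flagged), your argument is complete and agrees with the proof in the cited reference.
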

\FIGc{boundary}{ Reordering relation.}{1.4cm}

 Again one defines the product $\al_1 \al_2$ of of two skein elements $\al_1$ and $\al_2$ by stacking $\al_1$ above $\al_2$. This makes $\cS_R\SM$ an  $R$-algebra, which was first defined by Muller \cite{Muller}.

\subsection{Basis for $\cS_R\SM$} {\em A $\cP$-arc}  is an immersion $\al: [0,1]\to \Sigma$ such that $\al(0), \al(1)\in \cP$ and
 the restriction of $\al$ onto $(0,1)$ is an embedding into  $\Sigma \setminus \cP$. {\em A $\cP$-knot}  is an embedding of $S^1$ into  $\Sigma \setminus \cP$. A $\cP$-knot or a $\cP$-arc is {\em trivial} if it bounds a disk in $\Sigma$. Two $\cP$-arcs (resp. $\cP$-knots) are
 {\em $\cP$-isotopic} if they are isotopic in the class of $\cP$-arcs  (resp. $\cP$-knots).

A $\cP$-arc  is called {\em boundary} if it can be isotoped into $\pS$, otherwise it is called {\em inner}.
We consider every $\cP$-arc and every $\cP$-knot as an element of $\cS_R\SM$ by equipping it with the vertical framing. In the case when the two ends of a $\cP$-arc are the same point $p\in\cP$, we order the left strand to be above the right one.

An {\em integer $\cP$-lamination} of $\Sigma$ is an unordered collection $\mu=(n_i,C_i)_{i=1}^m$, where
\begin{itemize}
\item each $n_i$ is a positive integer
\item each $C_i$, called a component of $\mu$, is a non-trivial  $\cP$-knot or a non-trivial $\cP$-arc
\item no two $C_i$ intersect in  $\Sigma\setminus \cP$
\item no two $C_i$ are  $\cP$-isotopic.
\end{itemize}

In an integer $\cP$-lamination $\mu=(n_i,C_i)_{i=1}^m$, a knot component commutes with any other component
(in the algebra $\cS_R\SM$), while a $\cP$-arc component $C$  $q$-commutes with any other component $C'$ in the sense that $CC'= q^k C'C$ for a certain $k\in \BZ$. The $q$-commutativity follows from Proposition \ref{r.boundary}.
Hence the element

\be
\lbl{eq.basis}
 b_\mu= \prod_{i} (C_i)^{n_i}
 \ee
 is defined up to a factor $q^k, k\in \BZ$. To make $b_\mu$ really well-defined, we will fix once and for all a total order on the set of all $\cP$-arcs in $\Sigma$. Now define $b_\mu$ by \eqref{eq.basis}, where the product is taken in this order.   To simplify some proofs, we further assume that  in the total order any boundary $\cP$-arc comes before any inner $\cP$-arc, although this is not necessary.
  
It follows from \cite[Lemma 4.1]{Muller} that the set of all $b_\mu$, where $\mu$ runs the set of all integer $\cP$-laminations including the empty one, is the a free $R$-basis of $\cS_R\SM$.

Suppose  $\bP=(P_n)$ and $\bQ=(Q_n)$ are  normalized sequences of polynomials in  $R[t]$. Define
\be
\lbl{eq.basis2}
 b_{\mu,\bP, \bQ}= \prod_{i} F_{n_i}(C_i),
 \ee
where the product is taken in the above mentioned order, and
\begin{itemize}
\item $F_{n_i}=P_{n_i} $ if $C_i$ is a knot,
\item  $F_{n_i}= Q_{n_i}$ if $C_i$ is an inner $\cP$-arc,
\item  $F_{n_i}(C_i) = (C_i)^{n_i}$ if $C_i$ is a boundary $\cP$-arc.
\end{itemize}

Then the set  $\cB_{\bP, \bQ}$ of all  $b_{\mu,\bP, \bQ}$, where $\mu$ runs the set of all integer $\cP$-laminations, is a free $R$-basis of $\cS_R\SM$. A pair of sequences of polynomials $(\bP,\bQ)$ are {\em positive over $R$} if they are normalized and the basis $\cB_{\bP, \bQ}$ is positive for any marked surface.

\def\Df{\mathbb D, \cP}

\subsection{Positive basis in quotients} The following follows right away from the definition.
\no{
\begin{lemma}
Suppose $\cB$ is a positive basis of an $R$-algebra $A$ and $B\subset A$ is an $\CR$-submodule which respects the base $\cB$ in the sense that $B$ is freely  $R$-spanned by $I \cap \cB$. Let $\pi: A \to A/B$ be the natural projection. Then $\pi(\cB\setminus B)$ is a basis of $A/B$, and the coordinates of $\pi(b b')$ for any $b,b'\in \cB$ in this basis are in $R_+$.
\end{lemma}
}
\begin{lemma}
\lbl{r.quotient}
 Let $\cB$ be a positive basis of an $R$-algebra $A$ and $I\subset A$ be an ideal of $A$, with $\pi: A \to A/I$  the natural projection. Assume that $I$
 respects the base $\cB$ in the sense that $I$ is freely  $R$-spanned by $I \cap \cB$.  Then $\pi(\cB\setminus I)$ is a positive basis of $A/I$.
\end{lemma}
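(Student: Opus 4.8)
The plan is to verify directly the two clauses in the definition of a positive basis: that $\pi(\cB\setminus I)$ is a free $R$-basis of $A/I$, and that all structure constants with respect to this basis lie in $R_+$. Both should follow formally from the hypotheses, with essentially no computation.

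First I would record the set partition $\cB = (I\cap\cB) \sqcup (\cB\setminus I)$ together with the corresponding module decomposition $A = I \oplus M$, where $M = \bigoplus_{b\in\cB\setminus I} Rb$; this uses the hypothesis that $I$ is freely spanned by $I\cap\cB$. Restricting $\pi$ to $M$ then gives an isomorphism $M \xrightarrow{\sim} A/I$, so that $\pi(\cB\setminus I)$ is a free $R$-basis of $A/I$. In particular $\pi$ is injective on $\cB\setminus I$: if $\pi(b)=\pi(b')$ with $b,b'\in\cB\setminus I$, then $b-b'\in I$, but the $\cB$-expansion of any element of $I$ is supported on $I\cap\cB$, which forces $b=b'$.

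Second, for positivity, I would take $b,b',b''\in\cB\setminus I$ and expand the product in $A$ as $b'b'' = \sum_{c\in\cB}\lambda_c\, c$ with every $\lambda_c\in R_+$; this is exactly the positivity of the basis $\cB$. Applying $\pi$ annihilates each term with $c\in I\cap\cB$, yielding $\pi(b')\pi(b'') = \pi(b'b'') = \sum_{c\in\cB\setminus I}\lambda_c\,\pi(c)$. Since $\pi(\cB\setminus I)$ is a basis of $A/I$, the coefficients appearing here are precisely the structure constants, so the $\pi(b)$-coefficient of $\pi(b')\pi(b'')$ equals $\lambda_b\in R_+$.

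There is no substantive obstacle here; the lemma is formal, as the remark preceding it suggests. The only point requiring any care is that the $\cB$-coefficients of $b'b''$ computed in $A$ pass unchanged to the structure constants in $A/I$, which is exactly what the hypothesis that $I$ respects the basis $\cB$ guarantees: projecting merely deletes the basis vectors lying in $I$ and leaves the remaining, already positive, coefficients intact.
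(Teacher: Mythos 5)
Your proof is correct, and it is exactly the argument the paper has in mind: the paper states this lemma with no written proof at all, remarking only that it ``follows right away from the definition,'' and your write-up simply spells out that immediate verification (the decomposition $A = I \oplus \bigoplus_{b \in \cB\setminus I} Rb$ and the observation that $\pi$ deletes the basis vectors in $I\cap\cB$ while preserving the remaining nonnegative coefficients). No discrepancy to report.
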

\subsection{Ideal generated by boundary arcs}
\begin{lemma}
\lbl{r.ideal}
Suppose $\al_1,\dots,\al_l$ are boundary $\cP$-arcs, and $I=\sum_{i=1}^l \al_i \cS_R\SM$.

 (a) The set $I$ is a 2-sided ideal of $\cS_R\SM$.

 (b) For any normalized sequences $\bP, \bQ$ of polynomials in  $R[t]$, $I$
 respects the basis $\cB_{\bP,\bQ}$.
\end{lemma}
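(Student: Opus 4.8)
The plan is to treat (a) and (b) separately, with (a) an immediate consequence of the $q$-commutativity of boundary arcs and (b) reduced to a single geometric claim about multiplication by a boundary arc.

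\emph{Part (a).} By construction $I=\sum_{i=1}^l \al_i\,\cS_R\SM$ is a sum of principal right ideals, hence a right ideal. To see it is two-sided it suffices to show $\al_i\,\cS_R\SM=\cS_R\SM\,\al_i$ for each $i$. I would use that a boundary $\cP$-arc $q$-commutes with every component of every integer $\cP$-lamination, which is exactly the content of the reordering relation in Proposition~\ref{r.boundary}. Writing a basis element $b_\mu=\prod_j (C_j)^{n_j}$ and sliding $\al_i$ from the far left past each factor to the far right, every step only multiplies by a power of $q$, which is a unit; hence $\al_i b_\mu\in\cS_R\SM\,\al_i$, and symmetrically $b_\mu\,\al_i\in\al_i\,\cS_R\SM$. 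Since the $b_\mu$ span, $\al_i\,\cS_R\SM=\cS_R\SM\,\al_i$, and therefore $I=\sum_i\cS_R\SM\,\al_i$ is also a left ideal.

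\emph{Part (b), setup.} Let $\cB_I\subseteq\cB_{\bP,\bQ}$ be the set of basis elements $b_{\mu,\bP,\bQ}$ for which $\mu$ has a component $\cP$-isotopic to one of $\al_1,\dots,\al_l$. I would show $I=\mathrm{span}_R\,\cB_I$; since $\cB_{\bP,\bQ}$ is a free $R$-basis, this at once gives $I\cap\cB_{\bP,\bQ}=\cB_I$ and that $I$ is free on $\cB_I$, i.e.\ that $I$ respects $\cB_{\bP,\bQ}$. The inclusion $\mathrm{span}_R\,\cB_I\subseteq I$ is the easy direction: because boundary arcs precede all other arcs in the chosen total order, a basis element $b_{\mu,\bP,\bQ}\in\cB_I$ has $\al_i$ among its leftmost factors, so commuting that copy of $\al_i$ to the very front as in (a) yields $b_{\mu,\bP,\bQ}=q^{k}\al_i\,x$ for some $x\in\cS_R\SM$, whence $b_{\mu,\bP,\bQ}\in\al_i\,\cS_R\SM\subseteq I$.

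\emph{Part (b), the geometric core, and the main obstacle.} For the reverse inclusion it suffices to prove that $\al_i\,b_{\nu,\bP,\bQ}\in\mathrm{span}_R\,\cB_I$ for every $i$ and every basis element $b_{\nu,\bP,\bQ}$. This is the crux of the lemma, and it is where I expect the real work to lie. The idea is that, being a boundary arc, $\al_i$ can be isotoped into a collar of $\pS$, hugging the boundary segment it is parallel to; pushed there it is disjoint from every component of $\nu$ except possibly at the two marked points $\partial\al_i$. Consequently the only interactions of $\al_i$ with the diagram underlying $b_{\nu,\bP,\bQ}$ occur at these marked points and are governed by the reordering relation of Proposition~\ref{r.boundary}, which rescales by powers of $q$ without ever invoking a crossing resolution. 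Thus $\al_i\,b_{\nu,\bP,\bQ}$ is an $R$-linear combination of diagrams each containing $\al_i$ as a disjoint component: if $\al_i$ is not $\cP$-isotopic to any component of $\nu$ it is adjoined as a new boundary-arc component of multiplicity one, while if it is $\cP$-isotopic to an existing (necessarily boundary-arc) component it merely raises that component's multiplicity by one. In either case every resulting term lies in $\mathrm{span}_R\,\cB_I$, as required. The main obstacle is precisely this geometric assertion, namely that multiplication by a boundary arc cannot make the arc disappear: one must verify carefully that after pushing $\al_i$ to the boundary all crossings with the underlying diagram are reordering crossings at the points of $\partial\al_i$, rather than genuine interior crossings whose skein resolution could destroy $\al_i$. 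This reduces to the facts that a boundary arc has zero geometric intersection number with every $\cP$-knot and $\cP$-arc, and that the boundary segment it is parallel to contains no marked point in its interior. Once this is established, (b) follows formally from the two inclusions above.
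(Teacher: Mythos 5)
Your proposal is correct and follows essentially the same route as the paper: part (a) via $q$-commutativity of boundary arcs, and part (b) by exploiting the convention that boundary $\cP$-arcs come first in the total order, so that $I$ is freely spanned by the basis elements $b_{\mu,\bP,\bQ}$ whose lamination contains some $\al_i$. The ``geometric core'' you isolate --- that multiplying by a boundary arc produces only reordering crossings at marked points, hence merges $\al_i$ into the lamination up to a power of $q$ --- is exactly what the paper packages into its earlier assertion (via Proposition~\ref{r.boundary}) that boundary arcs $q$-commute with all other components, so your write-up just makes explicit what the paper leaves implicit.
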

\begin{proof}
(a) Suppose $\al$ is a boundary $\cP$-arc. Since $\al$ $q$-commutes with any basis element $b_\mu$ defined by \eqref{eq.basis}, $\al\cS_R\SM$ is a 2-sided ideal of $\cS_R\SM$. Hence $I$ is also a 2-sided ideal.

(b) In the chosen order,  the boundary $\cP$-arcs come before any inner arcs $\cP$-arc.
Hence $I$ is freely $R$-spanned by all $b_{\mu,\bP,\bQ}$ such that $\mu$ has one of the $\al_i$ as a component.
 Thus, $I$ respects $\cB_{\bP,\bQ}$.
\end{proof}

\subsection{Proof of Theorem \ref{r.main2}}
We will prove the following stronger version of Theorem \ref{r.main2}.

 \begin{theorem}\lbl{r.main2a}
 Suppose a pair $(\bP,\bQ)$ of sequences of  polynomials in $R[t]$, $\bP= (P_n(t)_{n\ge 0})$ and $\bQ= (Q_n(t)_{n\ge 0})$, are positive over $R$.
Then $P_n(t)$ is an $R_+$-linear combination of \ $T_0(t), T_1(t), \dots, T_n(t)$ and $Q_n(t)$ is an $R_+$-linear combination of $1, t,  \dots, t^n$. Moreover, $P_1(t)=Q_1(t)=t$.
\end{theorem}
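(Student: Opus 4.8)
The plan is to treat the loop sequence $\bP$ and the arc sequence $\bQ$ separately: the $\bP$ statement reduces immediately to the closed-surface theorem already established, while the $\bQ$ statement is handled by an annulus computation that mirrors the proof of Theorem~\ref{thm.1f}, using the translation formula of Proposition~\ref{r.trans} and the positivity hypothesis.

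For $\bP$ I would take any non-planar oriented surface $\Sigma$ and view it as the marked surface $(\Sigma,\emptyset)$ with no marked points. Then there are no $\cP$-arcs, the basis $\cB_{\bP,\bQ}$ collapses to $\cB_\bP$, and the assumption that $(\bP,\bQ)$ is positive over $R$ says precisely that $\bP$ is positive for $\Sigma$. Theorem~\ref{thm.1f} then yields at once that each $P_n$ is an $R_+$-linear combination of $T_0,\dots,T_n$ and that $P_1=t$. In particular I may write $P_n=\sum_{k=0}^n c_kT_k$ with all $c_k\in R_+$ and $c_n=1$, which I reuse below.

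For $\bQ$ I would work in the marked annulus $\Aio$, in which the inner arc $\uu_0$ meets the core loop $z$ once. Every integer $\cP$-lamination of $\Aio$ is either a power of the loop $z$ or a power of a single winding arc $\uu_j$, so the positive basis consists of the elements $P_m(z)$ together with the $Q_k(\uu_j)$, all of which are pairwise distinct. To get $Q_1=t$, write $Q_1(t)=t+b$ and, using $P_1=t$ and the $n=1$ case of Proposition~\ref{r.trans} (so $\uu_0 z=\uu_0T_1(z)=q\,\uu_1+q^{-1}\uu_{-1}$), compute
\be
Q_1(\uu_0)\,z \;=\; q\,\uu_1+q^{-1}\uu_{-1}+b\,z
\;=\; q\,Q_1(\uu_1)+q^{-1}Q_1(\uu_{-1})+b\,P_1(z)-b\,(q+q^{-1}),
\ee
where I substituted $\uu_{\pm1}=Q_1(\uu_{\pm1})-b$ and $z=P_1(z)$. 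Since $Q_1(\uu_0)$ and $z=P_1(z)$ are basis elements, positivity forces every coefficient into $R_+$; in particular $b\in R_+$ and the constant term $-b(q+q^{-1})\in R_+$. As $b(q+q^{-1})\in R_+$ as well, the property $R_+\cap(-R_+)=\{0\}$ gives $b(q+q^{-1})=0$, and since $R$ is a domain with $q+q^{-1}\neq0$ we conclude $b=0$, i.e.\ $Q_1=t$.

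For general $Q_n=\sum_{k=0}^n d_k\,t^k$ (with $d_n=1$) I would prove $d_k\in R_+$ by spreading the monomials $\uu_0^{\,k}$ into distinct basis elements through winding. Concretely, I would examine the product $Q_n(\uu_0)\,T_N(z)$ for large $N$: resolving all crossings positively winds the $k$ parallel strands of $\uu_0^{\,k}$ uniformly to give the top-order term $q^{kN}\uu_N^{\,k}$, whose twisted form is $Q_k(\uu_N)$. The idea is that the only contribution to the basis element $Q_k(\uu_N)$ at the highest power $q^{kN}$ comes from the summand $d_k\,\uu_0^{\,k}T_N(z)$, so positivity of this coefficient forces $d_k\in R_+$. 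The hard part will be exactly this bookkeeping: a single loop meets the $k$ parallel strands in $k$ points, so $\uu_0^{\,k}T_N(z)$ expands a priori into many terms (lower-winding arcs, terms carrying loops, terms where $z$ reconnects two strands), and I must show these inessential terms do not pollute the top-order coefficient of $Q_k(\uu_N)$. I expect to control them either by a degree-in-$q$ argument (each reconnection strictly lowers the $q$-order below $kN$) or, more robustly, by passing to the quotient by the ideal generated by the boundary $\cP$-arcs via Lemmas~\ref{r.quotient} and~\ref{r.ideal}, in which the reconnected/boundary-parallel terms are discarded and the spreading becomes clean. Once each $d_k\in R_+$ is in hand, together with $P_1=Q_1=t$ and the $\bP$ conclusion, the theorem follows.
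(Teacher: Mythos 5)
Your handling of $\bP$ (reduce to Theorem \ref{thm.1f} by viewing a non-planar surface as a marked surface with $\cP=\emptyset$) is exactly the paper's first step, and your annulus argument for $Q_1=t$ is correct, though it takes a different route from the paper: the paper proves $Q_1=t$ in the disk $\BD_1$ with four marked points, resolving the single crossing of $xy$ and working modulo the ideal of boundary arcs (Lemmas \ref{r.quotient} and \ref{r.ideal}), whereas you use Proposition \ref{r.trans} in the marked annulus $\Aio$. Both work; only note that the basis of $\cS_R(\Aio)$ also contains boundary $\cP$-arcs and mixed laminations, so your description of it is incomplete (harmlessly so, since you only need that $Q_1(\uu_0)$, $P_1(z)$, $Q_1(\uu_{\pm 1})$ and $1$ are distinct basis elements).

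The genuine gap is in the general-$n$ step, and it is not just deferred bookkeeping: the key claim is false as stated. First, a smaller point: $T_N(z)$ is not known to be a basis element (the basis element is $P_N(z)$), so positivity does not directly constrain the expansion of $Q_n(\uu_0)T_N(z)$. More seriously, the claim that the all-positive resolution of $\uu_0^{\,k}T_N(z)$ produces the top-order term $q^{kN}\uu_N^{\,k}$ cannot hold: the loop factor carries a total winding budget of $N$, which cannot wind each of $k$ parallel strands $N$ times. Already for $k=2$, $N=1$ one finds, modulo the boundary ideal, that $\uu_0^{\,2}z$ is a $q$-combination of the \emph{mixed} elements $\uu_0\uu_1$ and $\uu_{-1}\uu_0$ (consistent with $z\uu_0^2=(\uu_1+\uu_{-1})\uu_0$ at $q=1$); in general the winding distributes among the strands, giving terms $\uu_j^{\,k-r}\uu_{j+1}^{\,r}$ with $jk+r=N$, and a pure power $\uu_N^{\,k}$ only appears if one uses winding $kN$, i.e.\ the product $\uu_0^{\,k}T_{kN}(z)$. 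Establishing such a multi-strand analogue of Proposition \ref{r.trans}, and then ruling out contamination of the $Q_k(\uu_N)$-coefficient by terms $d_{k'}$ with $k'>k$ through the (not necessarily positive) change of basis between monomials and the $Q_i$, is the real work, and the proposal does not do it. The paper sidesteps all of this by changing the surface: in the disk $\BD_n$ with $2n+2$ marked points, Lemma \ref{r.phu2} shows every resolution of $x^k y_n$ with at least one positive crossing is either zero (a trivial arc appears) or lies in the boundary ideal $I$, so $x^k y_n\equiv q^{-kn}z_{k,n} \bmod I$ with the $z_{k,n}$ distinct basis elements not in $I$; positivity in the quotient then gives all coefficients of $Q_n$ in $R_+$ at once.
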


\begin{lemma} One has $Q_1(t)= t$.
\end{lemma}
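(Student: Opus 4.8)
We must show that if $(\bP,\bQ)$ is positive over $R$, then the normalized polynomial $Q_1(t)$ — which is applied to inner $\cP$-arcs — equals $t$. Since $Q_1$ is monic of degree $1$, we may write $Q_1(t)=t+b$ for some $b\in R$, and the content of the lemma is that $b=0$.

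The plan is to mimic the mechanism used in the proof of Theorem \ref{thm.1f}, but now working inside a suitable marked surface where an inner $\cP$-arc can be made to interact with a loop or another arc via Proposition \ref{r.trans} (the transparency relation $\uu_0\bullet T_n(z)=q^n\uu_n+q^{-n}\uu_{-n}$). First I would choose a convenient marked surface, the marked annulus $\Aio$ itself being the natural candidate, since its inner $\cP$-arcs are exactly the $\uu_n$ and its loops are powers of the core $z$. In $\cS_R(\Aio)$ the element $\uu_0$ is an inner $\cP$-arc, so $Q_1(\uu_0)=\uu_0+b\,\emptyset=\uu_0+b$. The idea is to compute a product involving $\uu_0$ and the loop $z$ in two ways: first by multiplying out using Proposition \ref{r.trans}, which expresses $\uu_0\bullet T_1(z)=\uu_0\bullet z = q\,\uu_1+q^{-1}\uu_{-1}$, and then by re-expressing each $\uu_{\pm1}$ through $Q_1$ via $\uu_{\pm1}=Q_1(\uu_{\pm1})-b$.

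Concretely, I would form the product $Q_1(\uu_0)\cdot P_1(z)$ (or the appropriately ordered analogue), where $P_1(z)=z+a$ with $a\in R_+$ already known to satisfy $P_1=t$ by Theorem \ref{thm.1f}, so $a=0$ and $P_1(z)=z$. Expanding $Q_1(\uu_0)\,z=(\uu_0+b)z=\uu_0 z + b z = q\,\uu_1+q^{-1}\uu_{-1}+b z$, and then substituting $\uu_{\pm1}=Q_1(\uu_{\pm1})-b$ gives
$$
Q_1(\uu_0)\,z = q\,Q_1(\uu_1)+q^{-1}Q_1(\uu_{-1}) + b\,z - b(q+q^{-1}).
$$
Using homology (winding number in $\Aio$) one checks that $\uu_0,\uu_1,\uu_{-1}$ are pairwise non-$\cP$-isotopic inner $\cP$-arcs and $z$ is a non-trivial knot, so that the terms $Q_1(\uu_1)$, $Q_1(\uu_{-1})$, $z$, and the empty lamination all correspond to distinct basis elements of $\cB_{\bP,\bQ}$. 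Positivity then forces every coordinate to lie in $R_+$: in particular $b\in R_+$ (the $z$-coordinate) and the constant coordinate $-b(q+q^{-1})\in R_+$. Since $b\in R_+$ we also have $b(q+q^{-1})\in R_+$, so both $d:=-b(q+q^{-1})$ and $-d$ lie in $R_+$, whence $d=0$ by $R_+\cap(-R_+)=\{0\}$. As $q+q^{-1}\neq 0$ (by the Claim in the proof of Theorem \ref{thm.1f}) and $R$ is a domain, we conclude $b=0$, i.e. $Q_1(t)=t$.

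I expect the main obstacle to be bookkeeping rather than a genuine conceptual difficulty: one must verify that the arcs $\uu_0,\uu_{\pm1}$ really are inner (not boundary) $\cP$-arcs in $\Aio$ so that $Q_1$, and not the trivial $t^n$ normalization, applies to them, and one must be careful with the fixed total order on $\cP$-arcs and the resulting $q$-commutativity factors from Proposition \ref{r.boundary}, so that the product $Q_1(\uu_0)\,z$ is expanded in the genuine basis $\cB_{\bP,\bQ}$ without stray powers of $q$ corrupting the sign analysis. Since $z$ is a knot it commutes with everything, so the ordering issue is mild here; the one point to confirm is that $\uu_0\bullet z$ in $\cS'_R(\Aio)$ matches the algebra product $\uu_0\cdot z$ in $\cS_R(\Aio)$ under the identification, which is exactly the content of Figure \ref{fig:Aio} and Proposition \ref{r.trans}.
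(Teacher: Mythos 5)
Your proof is correct, but it follows a genuinely different route from the paper's. The paper stays entirely inside the planar marked surface $\BD_1$ (a disk with four marked points): writing $Q_1(t)=t+a$, it resolves the single crossing of the two inner arcs $x=p_0p_2$ and $y=p_1q_1$, notes that both resolutions lie in the ideal $I_\partial$ generated by boundary $\cP$-arcs, so that $Q_1(x)Q_1(y)\equiv aQ_1(x)+aQ_1(y)-a^2 \pmod{I_\partial}$, and then uses Lemma~\ref{r.ideal} together with Lemma~\ref{r.quotient} (positivity descends to the quotient by a boundary-arc ideal) to get $a\in R_+$ and $-a^2\in R_+$, hence $a^2=0$ and $a=0$ since $R$ is a domain. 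You instead work in the marked annulus $\Aio$, apply the $n=1$ case of Proposition~\ref{r.trans} to the product $Q_1(\uu_0)\cdot P_1(z)$, and import $P_1(t)=t$ from Theorem~\ref{thm.1f} (legitimately available, since an unmarked non-planar surface is a marked surface with $\cP=\emptyset$, and the paper itself invokes Theorem~\ref{thm.1f} this way in proving Theorem~\ref{r.main2a}); extracting the coordinates of the basis elements $P_1(z)$ and $\emptyset$ then gives $b\in R_+$ and $-b(q+q^{-1})\in R_+$, forcing $b(q+q^{-1})=0$ and hence $b=0$. Your bookkeeping concerns are indeed benign: every lamination involved has a single component, so no ordering or $q$-commutation factors arise, and the identity of Proposition~\ref{r.trans} transports from the module $\cS'_R(\Aio)$ to the algebra $\cS_R(\Aio)$ because the defining relations of the former hold in the latter. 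The trade-off between the two arguments: yours avoids the quotient machinery of Lemmas~\ref{r.quotient} and~\ref{r.ideal} at this step, but it needs two external inputs (Theorem~\ref{thm.1f}, hence positivity on a non-planar surface, and the annulus computation), whereas the paper's argument is self-contained in the marked-surface section, uses only the single planar surface $\BD_1$ --- so it actually establishes the stronger fact that positivity for that one disk already forces $Q_1(t)=t$ --- and the quotient lemmas it relies on are needed anyway for the analysis of $Q_n$, $n\ge 2$.
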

\begin{proof}
\FIGc{disk1}{Left: $\BD_1$, with product $xy$. Middle: $\BD_5$, with product $x^2 y_5$. Right: Element $z_{2,5}$}{3.5cm}

Consider the marked surface $\BD_1$, which is a disk with 4 marked points $p_0, p_1, p_2$ and $q_1$ as in Figure \ref{fig:disk1}.
Let $x$ be the arc $p_0p_2$ and $y$ the arc
 $ p_1 q_1$. Suppose $Q_1(t)= t + a$, where $a\in R$. Using the skein relation to resolve the only crossing of $xy$, we get
\be
\lbl{eq.11}
 Q_1(x) Q_1(y) = a Q_1(x) + a Q_1(y) - a^2 \mod I_\partial.
\ee
Here $I_\partial$ is the ideal of $\cS_R(\BD_1)$ generated by all the boundary $\cP$-arcs, which respects $\cB_{\bP,\bQ}$ by Lemma~\ref{r.ideal}. Lemma~\ref{r.quotient} and Equ.~\eqref{eq.11} show that
$ a \in R_+$ and  $- a^2 \in R_+$. Hence both $a^2$ and $-a^2$ are $R_+$, which implies $a=0$.
\end{proof}

Now fix an integer number $n \ge 2$.  Let $\BD_n$ be the marked surface, which is a disk with $2n+2$ marked points which in clockwise orders are $p_0,p_1,\dots,p_{n+1}, q_n,\dots,q_1$.  We draw $\BD_n$ in the standard plane so that the straight segment $p_0p_{n+1}$, denoted by $x$, is vertical with $p_0$ being lower than $p_{n+1}$, and each straight segment $p_i q_i$ is horizontal. See   Figure~\ref{fig:disk1} for an example of $\BD_5$.
Let $y_n$ be the union of the $n$ straight segments $p_i q_i$, $i=1,\dots,n$. Considering  $x$ as an element of $\cS_R(\BD_n)$, we will present $x^k$ by  $k$ arcs, each going from $p_0$ monotonously upwards to $p_{n+1}$. Any two of these $k$ arcs do not have intersection in the interior of $\BD_n$, and the left one  is above the right one. See an example in  Figure~\ref{fig:disk1} where the diagram of $x^2 y_5$ is drawn. Then the diagram of $x^k y_n$ has exactly $kn$ double points.
 Let  $z_{k,n}$ be obtained from this diagram of $x^k y_n$ by negatively resolving all the crossings, again see Figure~\ref{fig:disk1}.

 For each $i=0,\dots,n-1$ let $\gamma_i$ be the arc $p_i p_{i+1}$, which is a boundary $\cP$-arc.
By Lemma
\ref{r.ideal}, the set
$$ I := \sum_{i=0}^{n-1} \gamma_i \cS\SM$$
is a 2-sided ideal of $\cS\SM$ respecting $\cB_{\bP,\bQ}$. It is important that the arc $p_n p_{n+1}$ is not in  $I$.
\begin{lemma}\lbl{r.phu2}
For every $k \le n$, one has
\be
\lbl{eq.2d}
 x^k y_n = q^{-kn} z_{k,n} \mod I.
 \ee
\end{lemma}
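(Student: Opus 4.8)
The plan is to expand the product $x^k y_n$ by applying the skein relation at each of its $kn$ crossings and to show that, modulo $I$, only the all-negative state survives. Since each crossing resolves as $q$ times its positive resolution plus $q^{-1}$ times its negative one, we get
\[
x^k y_n \;=\; \sum_{S} q^{\,n_+(S)-n_-(S)}\, D_S,
\]
where $S$ ranges over the $2^{kn}$ states (a choice of sign at every crossing), $n_\pm(S)$ count the positive/negative resolutions, and $D_S$ is the fully resolved diagram. The all-negative state $S_0$ has $n_+(S_0)=0$, $n_-(S_0)=kn$, and by definition $D_{S_0}=z_{k,n}$, so it contributes exactly $q^{-kn}z_{k,n}$. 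Thus the lemma reduces to the claim that $D_S\equiv 0\pmod I$ for every state $S\neq S_0$, i.e.\ for every state having at least one positive resolution.

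To organize the crossings, label by $(c,i)$ the crossing of the $c$-th vertical copy of $x$ (ordering the copies $x^{(1)},\dots,x^{(k)}$ from the one lying on top to the bottom one, i.e.\ left to right) with the $i$-th horizontal strand $y^{(i)}=p_iq_i$, ordered $i=1,\dots,n$ from the bottom. Reading off Figure \ref{fig:skein1}, at such a crossing the $x$-strand lies over the $y$-strand; the negative resolution joins the lower end of the $x$-strand to the $q_i$-side and the $p_i$-side to the upper end (the ``staircase'' smoothing that assembles $z_{k,n}$), while the positive resolution makes the complementary ``turn-back'' connection toward the $p_0$-corner. A short induction on $i$ establishes the routing fact: in the partial diagram in which all crossings in rows $1,\dots,i-1$ are resolved negatively, the strand entering row $i$ from below along $x^{(c)}$ originates at the marked point $p_{\max(i-c,\,0)}$. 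The inductive step uses only the negative smoothing at $(c,i)$, which connects the upward exit along $x^{(c)}$ to the leftward strand of $y^{(i)}$, which is $p_i$ if $c=1$ and otherwise runs down $x^{(c-1)}$.

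Now fix a state $S\neq S_0$. Let $i$ be the lowest row containing a positive resolution and, within it, let $c$ be the leftmost column with a positive resolution; by minimality every crossing below row $i$, and every crossing $(c',i)$ with $c'<c$, is negative. The positive resolution at $(c,i)$ caps together the lower end of $x^{(c)}$ and the left end of $y^{(i)}$. Tracing these two ends back through the surrounding negative resolutions and applying the routing fact, the lower end reaches $p_{\max(i-c,0)}$, while the left end reaches $p_i$ when $c=1$ and $p_{\max(i-c+1,0)}$ when $c\ge 2$; in either case, when $c\le i$ these are the adjacent left-boundary points $p_{i-c}$ and $p_{i-c+1}$, and when $c>i$ they are both $p_0$. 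Hence the cap closes off a single component of $D_S$: either an arc joining $p_{i-c}$ to $p_{i-c+1}$ with $0\le i-c\le n-1$, which is isotopic to the generator $\gamma_{i-c}$ of $I$ (so $D_S\in I$), or an arc from $p_0$ to itself bounding an empty disk in the lower-left pocket, which is a trivial arc (so $D_S=0$ by the relation of Figure \ref{fig:arc}). In all cases $D_S\equiv 0\pmod I$, and summing over states gives $x^k y_n\equiv q^{-kn}z_{k,n}\pmod I$.

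The main obstacle is this third step: verifying that the turn-back cap, after the traceback through all intervening negative smoothings, really closes off a single component with the claimed endpoints, and that the resulting arc is genuinely isotopic to $\gamma_{i-c}$ (respectively bounds an empty disk) rather than winding around other strands or enclosing a marked point. This is a planar bookkeeping argument governed by the routing fact, which confines the two traceback strands to the region below row $i$ and left of column $c$, a region whose only marked points are $p_0,\dots,p_{i-c}$ on its boundary. One must also check that we never create $\gamma_n=p_np_{n+1}$ (which is \emph{not} in $I$); indeed the positive caps only ever reach left-boundary points $p_j$ with $j\le n-1$, the apex $p_{n+1}$ being attainable solely through the all-negative staircase. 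The remaining points—that $D_{S_0}=z_{k,n}$ and that its coefficient is $q^{-kn}$—are immediate from the definitions.
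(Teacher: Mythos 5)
Your proof is correct and follows essentially the same route as the paper's: expand $x^k y_n$ into the $2^{kn}$ resolution states, pick a minimal positively-resolved crossing, and trace the lower-left cap back through smoothings that are forced to be negative, concluding that the resulting component is either a generator $\gamma_{i-c}$ of $I$ or a trivial arc at $p_0$. The only (immaterial) difference is the ordering used to choose the minimal positive crossing — you take the lowest row and then the leftmost column, while the paper orders crossings lexicographically by $(l+m,l)$ — and both orderings guarantee that every crossing met in the traceback is negatively resolved.
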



\def\fu{\mathfrak u}
\begin{proof}  Label the $k$  arcs of the diagram of $x^k$ from left to right by $1, \dots, k$. There are $kn$ crossings in the diagram of $x^k y_n$, and denote by $E_{l,m}$ the intersection of the $l$-th arc of $x^k$ and the arc $p_m q_m$ of $y_n$. Order the set of all $kn$ crossings $E_{lm}$ by the lexicographic pair $(l+m,l)$.

Suppose $\tau$ is one of the $2^{kn}$ ways of resolutions of all the $kn$ crossings. Let $D_\tau$ be the result of the resolution $\tau$, which is a diagram without crossing. Assume $\tau$ has at least one positive resolution. We will prove that $D_\tau\in I$.
 Let $E_{l,m}$ be the smallest crossing at which the resolution is positive.
  In a small neighborhood of $E_{l,m}$, $D_\tau$ has two arcs, with the lower left one denoted by $\delta$, see Figure \ref{fig:positive}.
\FIGc{positive}{Left: Crossing $E_{l,m}$. Right: Its positive resolution, and the arc $\delta$.}{1.3cm}

The resolution at any $E_{l', m'} < E_{l,m}$ is negative. These data are enough to determine the arc $\fu$ of $D_\tau$ containing $\delta$, see Figure \ref{fig:proof4}. Namely, if $m \ge l$ then $\fu$ is $\gamma_{m-l}$, and if $m <l$ then $\fu$ is  an arc whose two end points are $p_0$, which is 0.  See Figure \ref{fig:proof5} for an example.
Either case, $D_\tau \in I$.
\FIGc{proof4}{The top right corner crossing is $E_{l,m}$, with positive resolution. The arc of $D_\tau$ containing $\delta$ keeps going down the south-west direction.}{3cm}
\FIGc{proof5}{Left: The arc $\fu$ with $l=4, m=5$. In this case $\fu=\gamma_1$. 
Right: The arc $\fu$ with $l=4, m=3$. In this case,  both end points of $\fu$ are $p_0$.
}{4.5cm}

Hence, modulo $I$, the only element obtained by resolving all the crossings of $x^k y_n$ is the all-negative resolution one, which is $z_{k,n}$. The corresponding factor coming from the skein relation is $q^{-kn}$. This proves Identity \eqref{eq.2d}.
\end{proof}

\begin{proof}[Proof of Theorem \ref{r.main2a}]
Theorem \ref{thm.1f} implies that $P_n$ is an $R_+$-linear combination of $T_k$ with $k \le n$.

Since $Q_1(t)=t$, each of $z_{k,n}$, $y_n$ in Lemma \ref{r.phu2} is an element of the basis $\cB_{\bP,\bQ}$.

Suppose $Q_n(t)= \sum_{k=0}^n c_k t^k$ with $c_k \in R$. From \eqref{eq.2d}, we have
$$ Q_n(x) y_n =\left( \sum_{k=0}^n c_k x^k \right) y_n = \sum_{k=0}^n c_k q^{-kn} z_{k,n} + I.$$
Note that $z_{k,n}\not \in I$.
Since $I$ respects the basis $\cB_{\bP,\bQ}$, Lemma \ref{r.quotient} shows that
 $c_k \in R_+$ for all $k$. This completes the proof of Theorem \ref{r.main2a}.
\end{proof}

\end{document}